\documentclass[11pt,a4paper]{article}
\usepackage[ansinew]{inputenc}
\usepackage{enumerate}
\usepackage[spanish,english]{babel}
\usepackage{amsthm,amssymb,amsfonts,amsmath}
\usepackage{tkz-graph}
\usepackage{color}

\textwidth=15.5cm \textheight=22cm

\oddsidemargin=0.5cm \evensidemargin=0cm 
\headheight=0cm \headsep=0cm

\newtheorem{theorem}{Theorem}[section]
\newtheorem{lemma}[theorem]{Lemma}

\newtheorem{proposition}[theorem]{Proposition}
\newtheorem{corollary}[theorem]{Corollary}

\title{The resolving number of a graph}

\author{D. Garijo \thanks{Department of
Applied Mathematics I, University of Seville, Spain. Email
addresses: {\tt \{dgarijo,gonzalezh,almar\}@us.es}. The first and second authors were partially supported by projects 2009/FQM-164 and 2010/FQM-164. The third author was partially supported by the ESF EUROCORES programme EuroGIGA ComPoSe IP04 MICINN Project EUI-EURC-2011-4306, and projects 2009/FQM-164 and 2010/FQM-164.} \and A.
González$^*$ \and A. Márquez$^*$}

\date{}

\begin{document}
\maketitle

\begin{abstract}

We study a graph parameter related to resolving sets and  metric dimension, namely the resolving number, introduced by Chartrand, Poisson and Zhang. First, we establish an important difference between the two parameters: while computing the metric dimension of an arbitrary graph is known to be NP-hard, we show that the resolving number can be computed in polynomial time. We then relate the resolving number to classical graph parameters: diameter, girth, clique number, order and maximum degree. With these relations in hand, we characterize the graphs with resolving number 3 extending other studies that provide characterizations for smaller resolving number.\\ \\
{\em Keywords:} resolving set, resolving number, metric dimension.
\end{abstract}

\section{Introduction}\label{sec:intro}

Let $G=(V(G),E(G))$ be a finite, simple, undirected and connected graph of order $n=|V(G)|$.
The \emph{ distance} $d(u,v)$ between two vertices $u, v \in V(G)$ is the length of a
shortest $u$-$v$ path in $G$. A vertex $u\in V(G)$ \emph{ resolves} a pair $\{x,y\}\subset V(G)$ if $d(u,x)\neq d(u,y)$. A
set of vertices $S\subseteq V(G)$ is a \emph{ resolving set} of $G$ if every pair of vertices of $G$ is resolved by some
vertex in $S$. The \emph{ metric dimension} of $G$, denoted by ${\rm dim} (G)$, is the minimum cardinality of a resolving set of $G$.
The \emph{ resolving number}, written as ${\rm res} (G)$,
is the minimum $k$ such that every $k$-subset of $V(G)$ is a resolving set of $G$. Obviously, every set $S\subseteq V(G)$ with $|S|\geq {\rm res} (G)$ is a resolving set of $G$.

Resolving sets and metric dimension were first introduced by Slater~\cite{slater}, and independently by Harary and Melter~\cite{melter}. Much latter, Chartrand et al.~\cite{upper} defined the resolving number. There exists by now
an extensive literature on resolving sets and  the resolving parameters related to them, including applications to several areas such as coin weighing problems, pharmaceutical chemistry, robot navigation, network discovery and verification, and problems of pattern recognition and image processing. See for instance \cite{beerliova,cartesian,chartrand,upper,MDUDRN,extremal,landmarks,tomescu}.

Although the metric dimension and the resolving number are closely related by their definitions, there are significant differences between them. For instance, one can easily find infinite families of graphs with the same metric dimension, whereas in \cite{MDUDRN} we showed that the set of graphs with fixed resolving number $a\geq 4$ is finite.
This paper establishes another important difference between these parameters: while computing the metric dimension of an arbitrary graph is  NP-hard (see \cite{landmarks}), we prove that the resolving number can be computed in polynomial time.

We also deal with two types of problems typically studied for resolving parameters. First, we relate the resolving number to classical graph parameters: diameter, girth, clique number, order and maximum degree. Concretely, we provide bounds (most of which are tight) on the girth, the order, and the maximum degree of a graph in terms of its resolving number. We also determine the graphs $G$ with clique number equal to ${\rm res}(G)$, and restrict ourselves to trees to give bounds on the diameter, the order and the maximum degree, characterizing also the extremal cases.
Our study follows the same spirit as several papers that treat  analogous problems for the metric dimension. See \cite{chartrand,extremal,landmarks,yushmanov} for relations with the order and the diameter, and \cite{gimbel,tomescu2} for relations with the chromatic number and the partition dimension. Further, as it will be specified later, most of our results either improve relations obtained in other papers or continue with the studies developed in them.

Our second main problem is to characterize the graphs with given resolving number.
 As a consequence of some results in \cite{upper} and \cite{Jannesarion}, one can easily prove that the only graphs $G$ with ${\rm res}(G)\leq 2$ are paths and odd cycles. As a next step, we determine all graphs with resolving number 3 using as main tools
our relations between the resolving number and the graph parameters mentioned above.
 Similar results have been obtained for the metric dimension, and also combining metric dimension with resolving number: Chartrand et al.~\cite{chartrand} characterized the graphs with metric dimension equal to 1, $n-2$ and $n-1$, and the graphs $G$ such that ${\rm dim} (G)={\rm res}(G)=k$ are obtained by Garijo et al.~\cite{MDUDRN} (see also \cite{Jannesarichar}).

The paper is organized as follows. Section \ref{sec:section2} provides some notation, definitions and a series of technical results.  One of these results leads to Corollary \ref{coro} which states that ${\rm res}(G)$ can be computed in polynomial time. In Section \ref{sec:section3}, we present the above described relationships between the resolving number and classical graph parameters. As it was said before, besides their independent interest, these relations are the main tool to characterize the graphs with resolving number 3, which is done in Section \ref{sec:section4}. We conclude the paper in Section \ref{sec:section5} with some remarks and open problems.

\section{Preliminaries. Computational complexity of ${\rm res}(G)$}\label{sec:section2}

We begin by introducing some more notation and definitions.
 For $u\in V(G)$, we shall denote by $N(u)$ and $N[u]$ the open and closed neighbourhoods of $u$, respectively.
As usual, $\delta (u)$ is the degree of $u$, $\Delta (G)$ is the maximum degree of $G$, and $\langle A \rangle$ is the induced subgraph by a subset $A\subseteq V(G)$.

Let $P_n$ and $ C_n$ denote,  respectively, the path and the cycle on $n$ vertices. When no confusion can arise, we shall use $P$ (respectively, $C$ or $K$) to denote a path (respectively, cycle or clique) and also its vertex set.

The \emph{ girth} $g(G)$ is the minimum length of a cycle in $G$. Note that the girth of a tree is defined to be infinity.
The \emph{ clique number} $\omega (G)$ is the maximum size of a clique in $G$, and
the \emph{ diameter of $G$}, written as $d(G)$, is the maximum distance between any two vertices of $G$.
The \emph{ distance} between a vertex $u\in V(G)$ and a subset $A\subseteq V(G)$ is
 $ d(u,A)= \min_{v\in A}{d(u,v)}$, and the \emph{ diameter of
$A$} is $ d(A)= \max_{u,v\in A}{d(u,v)}$. The set of pairs of elements of $A$ is denoted by
$\mathcal{P}_2(A)$.

Let $R(x,y)$ be the set of vertices of $G$ that resolve the pair $\{x,y\}\subset V(G)$, and let $\overline{R}(x,y)=V(G)\setminus R(x,y)$. Note that for every pair $\{x,y\}$, the set $\overline{R}(x,y)$ is not a resolving set of $G$.

The following proposition is the key tool to obtain the previously mentioned difference on the computational complexity of
the resolving number and the metric dimension: ${\rm res}(G)$ is polynomial-time computable while computing ${\rm dim}(G)$ is NP-hard (see \cite{landmarks}).

\begin{proposition}\label{expr}
The resolving number of a graph $G$ is given by
$${\rm res} (G)= \max_{x,y\in V(G)}|\overline{R}(x,y)| +1.$$
\end{proposition}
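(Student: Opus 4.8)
The plan is to translate the definition of ${\rm res}(G)$ into a statement about when a subset fails to be a resolving set, and then read off the extremal value. The starting point is the equivalence implicit in the remark preceding the statement: a set $S\subseteq V(G)$ is a resolving set if and only if $S\not\subseteq \overline{R}(x,y)$ for every pair $\{x,y\}\in\mathcal{P}_2(V(G))$. Indeed, $S$ fails to resolve a pair $\{x,y\}$ precisely when no vertex of $S$ lies in $R(x,y)$, that is, when $S\subseteq\overline{R}(x,y)$. Writing $m=\max_{x,y\in V(G)}|\overline{R}(x,y)|$, the goal becomes ${\rm res}(G)=m+1$, which I would establish by proving the two inequalities separately.

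For the upper bound ${\rm res}(G)\le m+1$, I would take an arbitrary $S$ with $|S|=m+1$ and show it must be resolving. If it were not, then $S\subseteq\overline{R}(x,y)$ for some pair, forcing $|\overline{R}(x,y)|\ge|S|=m+1>m$ and contradicting the definition of $m$. Hence every $(m+1)$-subset is a resolving set, and so ${\rm res}(G)\le m+1$ by definition.

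For the lower bound ${\rm res}(G)\ge m+1$, I would exhibit a non-resolving set of size $m$. Choosing a pair $\{x_0,y_0\}$ attaining $|\overline{R}(x_0,y_0)|=m$, the set $\overline{R}(x_0,y_0)$ itself has exactly $m$ elements and, as noted in the remark, is not a resolving set. Thus it is false that every $m$-subset is resolving; together with the monotonicity already recorded in the introduction (every set of size at least ${\rm res}(G)$ is resolving, so the property ``every $k$-subset is a resolving set'' is monotone in $k$), this shows that no value $k\le m$ can satisfy the defining property, whence ${\rm res}(G)\ge m+1$. Combining the two bounds gives the claimed formula.

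I do not expect a serious obstacle: the argument is essentially a reformulation of the definition, and the only points needing care are the correct direction of the containment-versus-inequality correspondence and the verification that $(m+1)$-subsets exist. The latter is automatic, since $x,y\in R(x,y)$ for every pair (because $d(x,x)=0\ne d(x,y)$), giving $|\overline{R}(x,y)|\le n-2$ and hence $m+1\le n-1\le n$; this simultaneously guarantees that such subsets exist and that ${\rm res}(G)$ is well defined.
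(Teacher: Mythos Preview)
Your proof is correct and follows essentially the same route as the paper: set $m=\max_{x,y}|\overline{R}(x,y)|$, show that any $(m{+}1)$-subset must be resolving (else it would be contained in some $\overline{R}(x,y)$, contradicting maximality of $m$), and observe that a pair attaining the maximum yields a non-resolving $m$-set. Your extra remark verifying $m+1\le n-1$ so that $(m{+}1)$-subsets exist is a nice touch the paper leaves implicit.
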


\begin{proof} Let us denote $m=\max_{x,y\in V(G)} |\overline{R}(x,y)|$. Clearly, every set $S\subseteq V(G)$ with $|S|>m$ is a resolving set of $G$ (otherwise there would be a pair $\{x,y\}$ such that
$S\subseteq \overline{R}(x,y)$ and so  $m<|S|\leq |\overline{R}(x,y)|\leq m$). Hence, ${\rm res}(G)\leq m+1$. On the other hand, there is at least one pair  $\{x,y\}$ for which $|\overline{R}(x, y)|=m$, and  $\overline{R}(x, y)$ is not a resolving set of $G$. Then, ${\rm res}(G)\geq m+1$.
\end{proof}

\begin{corollary}\label{coro}
The resolving number ${\rm res}(G)$ can be computed in polynomial time in the order of $G$.
\end{corollary}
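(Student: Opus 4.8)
The plan is to leverage Proposition~\ref{expr}, which expresses ${\rm res}(G)$ entirely in terms of the quantity $\max_{x,y\in V(G)}|\overline{R}(x,y)|$. Since the resolving number is obtained by adding $1$ to this maximum, it suffices to show that $m=\max_{x,y\in V(G)}|\overline{R}(x,y)|$ can be computed in polynomial time in $n=|V(G)|$. The strategy is therefore to bound the cost of evaluating, for a single pair $\{x,y\}\in\mathcal{P}_2(V(G))$, the cardinality $|\overline{R}(x,y)|$, and then to account for the number of pairs that must be examined.

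First I would precompute all pairwise distances $d(u,v)$ in $G$. This can be done by running a breadth-first search from each vertex, which costs $O(n(n+|E(G)|))=O(n^3)$ time, or by any all-pairs shortest path routine; the resulting distance matrix occupies $O(n^2)$ space and lets us look up any distance in constant time. Next, for a fixed pair $\{x,y\}$, recall that a vertex $u$ fails to resolve $\{x,y\}$ precisely when $d(u,x)=d(u,y)$; thus $\overline{R}(x,y)=\{u\in V(G): d(u,x)=d(u,y)\}$. Computing $|\overline{R}(x,y)|$ then amounts to scanning all $n$ vertices $u$ and testing this single equality using the precomputed matrix, which takes $O(n)$ time per pair.

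I would then iterate over all $\binom{n}{2}=O(n^2)$ pairs $\{x,y\}$, compute each $|\overline{R}(x,y)|$ in $O(n)$ time, and retain the maximum; this loop runs in $O(n^3)$ time overall. Adding $1$ to the maximum yields ${\rm res}(G)$ by Proposition~\ref{expr}. The total running time is dominated by the all-pairs distance computation together with the double loop, giving a polynomial bound of $O(n^3)$ (or better, depending on the shortest-path method used), which establishes the corollary.

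I do not anticipate a genuine obstacle here, since the whole argument is essentially a direct algorithmic reading of Proposition~\ref{expr}; the only point requiring mild care is to confirm that the distance data structure supports constant-time lookups so that the per-pair work stays linear, and to state the polynomial degree honestly rather than leaving the bound implicit. The contrast with metric dimension is worth emphasizing: the closed-form maximum in Proposition~\ref{expr} replaces what would otherwise be an exponential search over subsets, which is exactly what makes ${\rm res}(G)$ tractable while ${\rm dim}(G)$ remains NP-hard.
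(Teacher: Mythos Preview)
Your proposal is correct and follows essentially the same approach as the paper: precompute the distance matrix in $O(n^3)$ time, then for each of the $O(n^2)$ pairs $\{x,y\}$ compute $|\overline{R}(x,y)|$ in $O(n)$ time by scanning the corresponding rows, and finally apply Proposition~\ref{expr}. The only cosmetic difference is that the paper cites Floyd--Warshall for the all-pairs distances while you mention BFS from each vertex; both yield the same $O(n^3)$ bound.
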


\begin{proof}
First, we preprocess the distance matrix of $G$ in $O(n^3)$ time (see \cite{floyd}).
Thus, for each pair $\{x,y\}\in \mathcal{P}_2(V(G))$ the set  $\overline{R}(x,y)$ can be obtained in $O(n)$ time by comparing the corresponding rows of $x$ and
$y$ in the distance matrix. By Proposition \ref{expr}, we can compute ${\rm res}(G)$ in $O(n^3)$ time since $|\mathcal{P}_2(V(G))|$ is  $O(n^2)$.
\end{proof}

We now provide three results which will be useful in the proofs of this paper.

\begin{lemma}\label{key}
Let $\mathcal{P}\subseteq \mathcal{P}_2(V(G))$ and let $V_1,V_2,\dots,V_{\ell}$ be a partition of
 $V(G)$. If every vertex of $V_i$, for $i=1, \ldots , \ell$, does not resolve at least $k_i\geq 0$ pairs of $\mathcal{P}$, then
 $$\sum_{i=1}^{\ell}|V_i|\cdot k_i\leq |\mathcal{P}|\cdot ({\rm res}(G)-1).$$
\end{lemma}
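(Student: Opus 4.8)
The plan is to prove the inequality by double-counting the ``non-resolving incidences'' between vertices and pairs. First I would introduce the set
$$I=\{(v,\{x,y\})\ :\ v\in V(G),\ \{x,y\}\in\mathcal{P},\ v\in\overline{R}(x,y)\},$$
consisting of a vertex together with a pair of $\mathcal{P}$ that this vertex fails to resolve, and estimate $|I|$ in two different ways. The whole argument rests on comparing these two counts.

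Counting by vertices, grouped according to the partition, gives a lower bound. Since every $v\in V_i$ does not resolve at least $k_i$ pairs of $\mathcal{P}$, each such $v$ contributes at least $k_i$ elements to $I$; here the fact that $V_1,\dots,V_\ell$ is a partition of $V(G)$ ensures that each incidence is counted exactly once. Summing over the blocks yields
$$|I|\ \ge\ \sum_{i=1}^{\ell}|V_i|\cdot k_i.$$

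Counting by pairs gives the matching upper bound. For a fixed $\{x,y\}\in\mathcal{P}$, the number of vertices $v$ with $(v,\{x,y\})\in I$ is exactly $|\overline{R}(x,y)|$. By Proposition \ref{expr} we have $\max_{x,y\in V(G)}|\overline{R}(x,y)|={\rm res}(G)-1$, hence $|\overline{R}(x,y)|\le {\rm res}(G)-1$ for every pair, and therefore
$$|I|\ =\ \sum_{\{x,y\}\in\mathcal{P}}|\overline{R}(x,y)|\ \le\ |\mathcal{P}|\cdot({\rm res}(G)-1).$$
Chaining the lower and upper bounds for $|I|$ gives the claimed inequality.

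I do not expect a genuine obstacle here; this is essentially a bookkeeping argument. The only points requiring care are that the hypothesis ``at least $k_i$'' produces an inequality pointing in the correct direction for the vertex count, and that Proposition \ref{expr} is exactly what supplies the per-pair ceiling ${\rm res}(G)-1$ needed for the upper bound. Everything else is routine summation over the partition.
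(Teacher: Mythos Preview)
Your proposal is correct and is essentially the same double-counting argument as the paper's proof: the paper observes that each $u\in V_i$ lies in at least $k_i$ sets $\overline{R}(x,y)$ with $\{x,y\}\in\mathcal{P}$, and then bounds $\sum_{\{x,y\}\in\mathcal{P}}|\overline{R}(x,y)|$ using Proposition~\ref{expr}. The only difference is that you make the incidence set $I$ explicit, whereas the paper leaves it implicit.
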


\begin{proof}
Every vertex $u\in V_i$ belongs to at least $k_i$ different sets of the form $\overline{R}(x,y)$ with $\{x,y\}\in \mathcal{P}$. Moreover, by Proposition \ref{expr}, we have $|\overline{R}(x,y)|\leq {\rm res}(G)-1$. Hence
$$\sum_{i=1}^{\ell}|V_i|\cdot k_i\leq \sum_{\{x,y\}\in \mathcal{P}} |\overline{R}(x,y)|\leq |\mathcal{P}|\cdot ({\rm res}(G)-1).$$
\end{proof}

Khuller et al.~\cite{landmarks} showed that $d(u,w)\in\{d-1,d,d+1\}$ for $u,v,w\in V(G)$ such that $\{v,w\}\in E(G)$ and $d(u,v)=d$. The following  straightforward lemma provides a version of this result for subsets of $V(G)$.

\begin{lemma}\label{general}
Let $u\in V(G)$ and $A\subseteq V(G)$. If $d(u,A)=d$ then $d(u,v)\in\{d,d+1,\ldots,d+d(A)\}$ for every $v\in A$.
\end{lemma}

A resolving set $S$ of $G$ is  \emph{minimal} if no proper subset
of $S$ is a resolving set. The maximum cardinality of a minimal resolving set is the \emph{upper dimension}
${\rm dim}^+(G)$ (see \cite{upper}). Clearly, every $(n - 1)$-subset of $V(G)$ is a resolving set  and
every resolving set contains a minimal resolving set. Hence,
$$1\leq {\rm dim}(G) \leq {\rm dim}^+(G) \leq {\rm res}(G) \leq n
- 1.$$
 This relation and several results in  \cite{upper} and \cite{Jannesarion} give all graphs $G$ with ${\rm res}(G)\leq 2$. Indeed, by Theorem 1 of \cite{Jannesarion}, the paths $P_1$ and $P_2$ are the only graphs with resolving number equal to 1. By Proposition 2.1 of \cite{upper} we have ${\rm res}(P_n)=2$ for $n\geq 3$ and ${\rm res}(C_n)=2$ if $n$ is odd. Theorem 2.4 of \cite{upper} says that ${\rm dim}^+(G) = {\rm res}(G)=2$ if and only if $G$ is a
path of order at least 4 or an odd cycle. If ${\rm dim}^+(G)=1$ and ${\rm res}(G)=2$ then ${\rm dim}(G)=1$ and, by Theorem A of \cite{upper}, one obtains that $G$ is a path.  Thus, we have proved the following result.

\begin{theorem}\label{res=2}
${\rm res}(G)\leq 2$ if and only if $G$ is a path or an odd cycle.
\end{theorem}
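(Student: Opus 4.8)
The plan is to prove both implications, leaning on the chain of inequalities $1\le {\rm dim}(G)\le {\rm dim}^+(G)\le {\rm res}(G)\le n-1$ established just above and on the cited characterizations from \cite{upper} and \cite{Jannesarion}. Since all the hard computations (the values of ${\rm res}$ on paths and cycles, and the classifications of small ${\rm dim}^+$ and ${\rm dim}$) are already available, I expect the argument to be a short assembly rather than new combinatorial work.

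For the \emph{if} direction I would simply invoke the known values of the resolving number on the two families. By Proposition 2.1 of \cite{upper} we have ${\rm res}(P_n)=2$ for $n\ge 3$ and ${\rm res}(C_n)=2$ whenever $n$ is odd, while $P_1$ and $P_2$ have resolving number $1$ by Theorem 1 of \cite{Jannesarion}. In every case ${\rm res}(G)\le 2$, which settles this implication immediately.

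For the \emph{only if} direction, suppose ${\rm res}(G)\le 2$ and argue by cases on the value of ${\rm res}(G)$. If ${\rm res}(G)=1$, then Theorem 1 of \cite{Jannesarion} forces $G\in\{P_1,P_2\}$, both paths. If ${\rm res}(G)=2$, the inequality chain gives ${\rm dim}^+(G)\le 2$, and I would split further according to ${\rm dim}^+(G)$. When ${\rm dim}^+(G)=2$, Theorem 2.4 of \cite{upper}, which characterizes the graphs with ${\rm dim}^+={\rm res}=2$, yields that $G$ is a path of order at least $4$ or an odd cycle. When ${\rm dim}^+(G)=1$, the chain forces ${\rm dim}(G)=1$ as well, and Theorem A of \cite{upper} (graphs of metric dimension $1$ are exactly the paths) gives that $G$ is a path. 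Assembling the cases, $G$ is always a path or an odd cycle.

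The only genuinely delicate point is to notice that Theorem 2.4 of \cite{upper} does not by itself close the argument, because it presupposes ${\rm dim}^+(G)=2$; one must separately dispose of the degenerate possibility ${\rm dim}^+(G)=1<{\rm res}(G)=2$ by descending to the metric dimension, and this is where the inequality chain does the crucial work. As an aside, a more self-contained route would be to exploit Proposition \ref{expr}: the hypothesis ${\rm res}(G)\le 2$ is equivalent to $|\overline{R}(x,y)|\le 1$ for every pair, i.e. every two vertices have at most one common equidistant vertex, and one could attempt to classify such graphs directly; but I expect leaning on the prior characterizations to be considerably shorter.
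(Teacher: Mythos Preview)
Your proposal is correct and mirrors the paper's own argument essentially step for step: the paper also invokes Theorem~1 of \cite{Jannesarion} for ${\rm res}(G)=1$, Proposition~2.1 of \cite{upper} for the \emph{if} direction, and then splits the case ${\rm res}(G)=2$ according to whether ${\rm dim}^+(G)=2$ (handled by Theorem~2.4 of \cite{upper}) or ${\rm dim}^+(G)=1$ (handled via ${\rm dim}(G)=1$ and Theorem~A of \cite{upper}). Your observation that the ${\rm dim}^+(G)=1$ subcase must be treated separately is exactly the point the paper makes as well.
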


\section{Relationships between ${\rm res} (G)$ and other graph parameters}\label{sec:section3}

\subsection{Diameter}

In \cite{MDUDRN}, we showed that $d(G)\leq  3{\rm res}(G)-5$ for a graph $G$ (not being a cycle) with ${\rm res}(G)\geq 3$.
Here, we improve this bound for trees characterizing also the extremal case.

For positive integers $a,b,c$, let $S_{a,b,c}$ be a spider with three legs of lengths $a, b, c$, respectively (i.e., a tree formed by three paths of lengths $a,b,c$ attached at a single vertex).

\begin{proposition}\label{diametertree} If $T$ is a tree that is not a path then $d(T)\leq 2{\rm res}(T)-4$,
 and equality holds if and only if $T\cong S_{a,b,b}$ with $b={\rm res}(T)-2$ and $a\leq b$.
\end{proposition}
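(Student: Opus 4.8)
The whole argument rests on one elementary computation with Proposition~\ref{expr}. Let $w$ be a vertex of degree at least $3$ and let $C_1,\dots,C_t$ be the components of $T-w$. If $x$ and $y$ are the neighbours of $w$ lying in $C_i$ and $C_j$ (with $i\neq j$), then $w$ is the midpoint of the unique $x$--$y$ path, and a vertex is equidistant from $x$ and $y$ if and only if it is $w$ or lies in some $C_k$ with $k\neq i,j$. Thus $\overline{R}(x,y)=\{w\}\cup\bigcup_{k\neq i,j}C_k$, and Proposition~\ref{expr} yields
\[
{\rm res}(T)\ \ge\ 1+|\overline{R}(x,y)|\ =\ 1+n-|C_i|-|C_j|.
\]
Taking $C_i,C_j$ to be the two smallest components maximises the bound; in particular ${\rm res}(T)\ge 1+\max_k|C_k|$.

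To prove $d(T)\le 2\,{\rm res}(T)-4$, fix a diametral path $P=v_0v_1\cdots v_d$. The endpoints $v_0,v_d$ have degree $1$, for otherwise $P$ could be extended; and since $T$ is not a path there is a vertex off $P$, whose first contact with $P$ is at some internal $v_i$, forcing $\delta(v_i)\ge 3$. Two of the components of $T-v_i$ contain $v_0,\dots,v_{i-1}$ and $v_{i+1},\dots,v_d$, so $\max_k|C_k|\ge\max(i,d-i)\ge\lceil d/2\rceil$. The displayed inequality gives ${\rm res}(T)\ge 2+\lceil d/2\rceil\ge d/2+2$, which is the claim (and is strict when $d$ is odd, so equality forces $d$ even).

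The \emph{if} part is a direct check. In $S_{a,b,b}$ with $a\le b$ the centre $o$ is the only vertex of degree $3$, its legs having lengths $a,b,b$. For a pair whose connecting path has midpoint $o$, the set $\overline{R}$ consists of $o$ together with the one leg avoided by the pair, so $|\overline{R}|\le 1+b$; any other midpoint has degree $2$ and gives $|\overline{R}|=1$. Hence $\max|\overline{R}|=b+1$, attained when $x,y$ lie on the legs of lengths $a$ and $b$, so ${\rm res}(S_{a,b,b})=b+2$; as $d(S_{a,b,b})=2b$ we get $d=2\,{\rm res}-4$ with $b={\rm res}-2$.

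The core of the proof is the converse, and the plan is to strip the equality case down to a spider one feature at a time, each deviation being blocked by the displayed inequality. Assume $d(T)=2\,{\rm res}(T)-4$; then $d$ is even and every pair satisfies $|\overline{R}|\le d/2+1$. First, any branch vertex $v_i$ on $P$ would give $\max(i,d-i)\le d/2$, hence $i=d/2$, so $o:=v_{d/2}$ is the unique branch vertex on $P$; consequently the two path--sides are bare paths of length $d/2$ and every off-path vertex lies in a component of $T-o$ disjoint from $P$. There can be only one such component $O$, since two of them would let us place $x,y$ in them while retaining both length-$d/2$ legs in $\overline{R}$, giving $|\overline{R}|\ge 1+d$; thus $\delta(o)=3$. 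Next $|O|\le d/2$, for otherwise placing $x,y$ on the two path-legs keeps $O$ and yields $|\overline{R}|=1+|O|>d/2+1$. Finally $O$ must be a path: a branch vertex inside $O$ would possess an upward component containing $o$ and both legs (of size at least $d+1$) that survives in $\overline{R}$ for an appropriate pair, forcing $|\overline{R}|\ge d+2$. Therefore $T$ is the spider with legs $d/2,d/2,|O|$, i.e. $T\cong S_{a,b,b}$ with $b=d/2={\rm res}(T)-2$ and $a=|O|\le b$. I expect the delicate point to be exactly this last step --- recognising that internal branching in $O$ resurrects the full diameter inside a single $\overline{R}$ --- together with the bookkeeping that guarantees each constructed pair really has the claimed midpoint.
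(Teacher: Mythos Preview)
Your proof is correct and rests on the same idea as the paper's --- locate a branch vertex on a diametral path and observe that the components on the ``wrong'' side form a large equidistance set --- but you organise it through Proposition~\ref{expr} and the formula $\overline{R}(x,y)=\{w\}\cup\bigcup_{k\ne i,j}C_k$, whereas the paper exhibits explicit non-resolving sets directly. The two are dual: a set $S$ of size $r$ failing to resolve $\{x,y\}$ is exactly the statement $S\subseteq\overline{R}(x,y)$, so $|\overline{R}(x,y)|\ge r$. Your packaging is a bit more systematic: the single displayed inequality ${\rm res}(T)\ge 1+n-|C_i|-|C_j|$ does all the work in both the bound and each step of the equality analysis, and you actually compute ${\rm res}(S_{a,b,b})$ rather than merely asserting it. The paper, by contrast, handles the equality case by ad hoc choices of non-resolving sets (e.g.\ $\{u_1,\dots,u_r\}$, then the observation that $P$ itself resolves, etc.).

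One cosmetic slip: your ``in particular ${\rm res}(T)\ge 1+\max_k|C_k|$'' is weaker than what you actually use two lines later. From the displayed inequality, with $C_i,C_j$ the two smallest and $t\ge 3$, one gets $n-|C_i|-|C_j|=1+\sum_{k\ne i,j}|C_k|\ge 1+\max_k|C_k|$, hence ${\rm res}(T)\ge 2+\max_k|C_k|$; it is this sharper form that yields ${\rm res}(T)\ge 2+\lceil d/2\rceil$. The argument is unaffected, but the ``in particular'' clause should read $2+\max_k|C_k|$.
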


\begin{proof}
For simplicity, let us denote $r={\rm res} (T)$. By Theorem \ref{res=2} we have $r\geq 3$. Suppose on the contrary that $d(T)\geq 2r-3$. Assume, without loss of generality, that $d(T)=2r-3$ and let $P=(u_1,u_2,...,u_{2r-2})$ be a shortest path of length $d(T)$.  Clearly, $\delta(u_1)=\delta(u_{2r-2})=1$. Further, since $T$ is not a path then
there is a vertex $u\in N[u_i]\setminus P$ with $1<i<2r-2$. If $i\geq r$ (analogous for $i<r$) then no vertex of $S=\{u_1,u_2,...,u_i\}$ resolves the pair $\{u,u_{i+1}\}$ and so $S$ is not a resolving set of $G$; a contradiction with $|S|=i\geq r$. Therefore, $d(T)\leq 2r-4$.

One can easily check that $d(S_{a,r-2,r-2})= 2{\rm res}(S_{a,r-2,r-2})-4$ for $a\leq r-2$.
Consider now a tree $T$ such that $d(T)=2r-4$, and let $P=(u_1,...,u_{2r-3})$ be a shortest path of length $d(T)$. Next, we prove that $T\cong S_{a,r-2,r-2}$ with  $a\leq r-2$.

Arguing as above, we obtain $\delta(u_1)=1$, $\delta(u_{2r-3})=1$, and
there is a vertex $u\in N[u_i]\setminus P$ with $1<i<2r-3$. Moreover, the sets $\{u_1,...,u_r\}$ and $\{u_{r-2},...,u_{2r-3}\}$ are not resolving sets when $i\geq r$ and $i\leq r-2$, respectively. This implies $u_i=u_{r-1}$, and so $\delta(u_{r-1})\geq 3$ and $\delta(u_j)=2$ for $j\neq 1,r-1,2r-3$. That $\delta(u_{r-1})=3$ follows from the fact that $P$ is a resolving set, and the same argument shows that the induced subgraph $\langle V(T)\setminus P \rangle$ is a path. Thus, $T\cong S_{a,r-2,r-2}$ for some positive integer $a$.
Further, no vertex of the set $(V(T)\setminus P)\cup \{u_{r-1}\}$ resolves the pair $\{u_{r-2},u_{r}\}$ and so it is not a resolving set. Hence, $a=|V(T)\setminus P|\leq r-2$.
\end{proof}

\subsection{Girth}

We now provide a tight bound on the girth of a graph in terms of its
resolving number.

\begin{proposition}\label{girth}
If $G$ is neither a tree nor a cycle then $g(G)\leq  2{\rm res}(G)-1$, and this bound is tight.

\end{proposition}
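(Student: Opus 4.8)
The plan is to invoke Proposition~\ref{expr}: it suffices to exhibit a single pair $\{x,y\}$ that at least $(g-1)/2$ vertices fail to resolve, since this forces ${\rm res}(G)=\max|\overline{R}|+1\ge (g+1)/2$. Let $C=(v_0,v_1,\dots,v_{g-1})$ be a shortest cycle, with indices read modulo $g$. I would first record two structural facts. A chord of $C$ would close a cycle shorter than $g$, so $C$ is induced; and by the same type of argument $d(v_a,v_b)=d_C(v_a,v_b)$ for all cycle vertices, because a $G$-path shorter than the shorter $C$-arc joining them, together with that arc, would contain a cycle of length strictly below $g$. Since $G$ is neither a tree nor a cycle we have $V(G)\setminus V(C)\neq\emptyset$, so by connectivity some $u\notin C$ is adjacent to a cycle vertex, which I relabel as $v_0$.

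The pair I would test is $\{u,v_1\}$, and the candidate non-resolvers are the vertices $w_j:=v_{g-j}$ on the arc running backwards from $v_0$, for $j=0,1,2,\dots$. For such a $w_j$ one has $d(w_j,v_1)=j+1$ (via the short arc $v_{g-j}\to\cdots\to v_0\to v_1$), while $d(w_j,u)\le d(w_j,v_0)+1=j+1$. Hence $w_j$ resolves $\{u,v_1\}$ only if $d(w_j,u)\le j$. The heart of the proof is to exclude this: a $w_j$--$u$ path of length $\le j$, together with the arc $w_j\to\cdots\to v_0$ (length $j$) followed by the edge $v_0u$, produces two distinct $w_j$--$u$ paths whose union contains a cycle of length at most $2j+1$, which the girth hypothesis forbids as soon as $2j+1<g$. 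Thus every $w_j$ with $j<(g-1)/2$ satisfies $d(w_j,u)=j+1=d(w_j,v_1)$ and fails to resolve $\{u,v_1\}$. Checking this short-cycle obstruction, while keeping all the relevant distances equal to their ``short-arc'' values (valid for $j\le g/2-1$), is the step I expect to demand the most care.

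Counting the non-resolvers $w_0=v_0,w_1,\dots$ produced this way yields at least $\lceil (g-1)/2\rceil\ge (g-1)/2$ distinct vertices in $\overline{R}(u,v_1)$. By Proposition~\ref{expr},
$$ {\rm res}(G)=\max_{x,y}|\overline{R}(x,y)|+1\ge \tfrac{g-1}{2}+1=\tfrac{g+1}{2}, $$
which is exactly the asserted inequality $g(G)\le 2\,{\rm res}(G)-1$.

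For tightness I would take $G$ to be an odd cycle $C_{2k+1}$ together with a single pendant vertex $u$ attached to $v_0$; then $g(G)=2k+1$ and $G$ is neither a tree nor a cycle. The construction above applied to $\{u,v_1\}$ already gives $|\overline{R}(u,v_1)|=k$, and a direct inspection of the remaining pair-types (pairs of cycle vertices, and pairs $\{u,v_i\}$ with $i\ge 2$) shows that no pair is missed by more than $k$ vertices. Hence ${\rm res}(G)=k+1$ and $g(G)=2k+1=2\,{\rm res}(G)-1$, so the bound is attained.
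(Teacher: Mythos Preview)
Your argument is correct and essentially the same as the paper's: both attach an off-cycle vertex $u$ to a shortest cycle at $v_0$, consider the pair $\{u,v_1\}$, and observe that the $\lceil (g-1)/2\rceil$ cycle vertices on the arc through $v_0$ fail to resolve it---the paper frames this by contradiction (exhibiting a non-resolving $r$-set), you frame it via Proposition~\ref{expr}, and the tightness example (odd cycle plus pendant) is identical. One small caveat on tightness: your ``direct inspection'' requires $k\ge 2$, since for $k=1$ the pair $\{v_1,v_2\}$ is missed by both $v_0$ and $u$, giving ${\rm res}=3\neq k+1$; the paper likewise restricts its example to $a\ge 3$.
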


\begin{proof}
Let $r={\rm res} (G)$. Suppose on the contrary that
$g(G)\geq  2r$.
Assume, without loss of generality, that $g(G)=2r$ and consider a
cycle of minimum length $C=(u_1,...,u_{2r})$. Since
$G$ is not a cycle then there is a vertex $u$ adjacent to some vertex of $C$, say $u_r$.
Further, $C$ has minimum length and so $(u_1, u_2, \ldots ,u_{r+1})$ is a
shortest $u_1$-$u_{r+1}$  path. Thus, the pair $\{u, u_{r+1}\}$ is resolved by no vertex of $S=\{u_1,...,u_r\}$, and so  $S$ is not a
 resolving set; a contradiction with $|S|=r$.

The graph obtained by attaching a pendant edge to any vertex of an odd cycle
$C_{2a+1}$ with $a\geq 3$ has girth $2a+1$ and resolving number
$a+1$. This proves that the bound is tight.
\end{proof}

\subsection{Clique number}

Jannesari and Omoomi~\cite{Jannesarion} proved that
 $\omega (G)\leq {\rm res} (G)+1$ for a graph $G$ such that ${\rm dim} (G)={\rm res} (G)$, and that equality holds only for complete graphs. However, their proof does not use the hypothesis ${\rm dim}(G)={\rm res}(G)$ but only the fact that every set $S\subseteq V(G)$ with $|S|\geq {\rm res} (G)$ is a resolving set of $G$. Thus, their result can be extended to every graph $G$.

\begin{proposition}\label{clique1}
For every graph $G$, $\omega (G)\leq {\rm res} (G) +1$ and equality holds if and only if $G$ is a complete graph.
\end{proposition}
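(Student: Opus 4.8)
The plan is to obtain the inequality directly from Proposition \ref{expr}, and then to treat the two directions of the equality characterization separately, with the converse being the substantial part. For the bound, let $K$ be a maximum clique, so $|K|=\omega(G)$, and fix any two vertices $x,y\in K$. Every other clique vertex $u\in K\setminus\{x,y\}$ satisfies $d(u,x)=1=d(u,y)$, hence fails to resolve $\{x,y\}$; thus $K\setminus\{x,y\}\subseteq\overline{R}(x,y)$ and $|\overline{R}(x,y)|\geq\omega(G)-2$. Proposition \ref{expr} then gives ${\rm res}(G)\geq|\overline{R}(x,y)|+1\geq\omega(G)-1$, which is exactly the claimed inequality. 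For the ``if'' direction I would simply compute the resolving number of a complete graph: in $K_n$ one has $d(u,x)\neq d(u,y)$ precisely when $u\in\{x,y\}$, so $\overline{R}(x,y)=V(K_n)\setminus\{x,y\}$ for every pair, and Proposition \ref{expr} yields ${\rm res}(K_n)=(n-2)+1=n-1=\omega(K_n)-1$, giving equality.

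The core of the argument is the converse. Suppose $\omega(G)={\rm res}(G)+1=:k$ and let $K$ be a maximum clique of size $k$. Since $V(G)=K$ forces $G$ to be complete, I would assume for contradiction that some vertex $w\notin K$ exists and exploit the fact (from Section \ref{sec:section2}) that every subset of size $k-1={\rm res}(G)$ is a resolving set. Write $B=\{v\in K:d(w,v)\geq 2\}$; since $K$ is a maximal clique, $w$ is not adjacent to all of $K$, so $B\neq\emptyset$. Applying the resolving property to $S=K\setminus\{v\}$ (for $v\in B$) and to the pair $\{v,w\}$ shows that some $u\in K\setminus\{v\}$ must satisfy $d(u,w)\neq 1$, i.e.\ $u\in B$; hence $|B|\geq 2$. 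Applying it instead to $S'=(K\setminus\{v_1,v_2\})\cup\{w\}$ and the pair $\{v_1,v_2\}$ forces $d(w,v_1)\neq d(w,v_2)$ for all $v_1,v_2\in B$, because the clique vertices in $S'$ cannot resolve that pair. As $d(v_1,v_2)=1$, the triangle inequality pins these distances to differ by exactly $1$; since three integers cannot pairwise differ by exactly $1$, I conclude $|B|=2$.

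It remains to rule out $|B|=2$. Write $B=\{v_1,v_2\}$ with $\{d(w,v_1),d(w,v_2)\}=\{d,d+1\}$ for some $d\geq 2$. If $k\geq 3$ there is a clique vertex $v_3$ adjacent to $w$, and then $d(w,v_i)\leq d(w,v_3)+d(v_3,v_i)=2$ for $i=1,2$, contradicting that the two distances are distinct and both at least $2$. The remaining possibility $k=2$ means ${\rm res}(G)=1$, so $G$ is $P_1$ or $P_2$ (the only graphs with resolving number $1$); since $\omega(G)=2$ forces an edge, $G=P_2$, which has no vertex outside $K$, again a contradiction. Hence no such $w$ exists, $V(G)=K$, and $G$ is complete.

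I expect the reverse direction to be the main obstacle. In particular, the delicate point is combining the two carefully chosen resolving sets with the triangle inequality to squeeze $B$ down to exactly two vertices, and then using a neighbour of $w$ inside the clique to eliminate that last case; the upper bound and the computation for $K_n$ are routine by comparison.
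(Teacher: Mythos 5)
Your argument is correct, but note that the paper does not actually supply a proof of this proposition: it imports the statement from Jannesari and Omoomi \cite{Jannesarion}, observing only that their proof never uses the hypothesis ${\rm dim}(G)={\rm res}(G)$ beyond the fact that every set of size at least ${\rm res}(G)$ is resolving. You give a complete self-contained argument instead, and it holds up. The inequality via $K\setminus\{x,y\}\subseteq\overline{R}(x,y)$ together with Proposition \ref{expr} is clean, and the converse is carried out in exactly the style the paper uses elsewhere (compare the sets $A_1,\dots,A_4$ in the proof of Proposition \ref{clique2}): exhibit sets of size exactly ${\rm res}(G)$ that are forced to resolve a specific pair. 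The chain of deductions is sound at each step: $B\neq\emptyset$ by maximality of $K$; $|B|\geq 2$ because the resolver of $\{v,w\}$ inside $K\setminus\{v\}$ must itself lie at distance at least $2$ from $w$; $|B|\leq 2$ because the distances from $w$ to the vertices of $B$ must be pairwise distinct yet pairwise differ by exactly $1$; and the final elimination uses a clique neighbour of $w$ to cap both distances at $2$, with the residual case $k=2$ dispatched by the characterization of graphs with resolving number $1$ quoted in Section \ref{sec:section2} (the fully degenerate case $\omega(G)=1$ is immediate since a connected graph with a vertex outside $K$ would contain an edge). What your route buys is independence from \cite{Jannesarion}; what the paper's citation buys is brevity. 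If this were to be incorporated, the only polish I would suggest is to say explicitly that $u\neq w$ (so $d(u,w)\geq 2$ rather than merely $\neq 1$) in the $|B|\geq 2$ step, and to note that $|S'|=k-1$ because $w\notin K$.
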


 As a next step,
we determine all graphs $G$ such that $\omega (G)={\rm res} (G)$.
 Let $\mathcal{F}_1$ denote the set of 14 graphs depicted in Figure~\ref{F1}.
For  positive integers $a,b$ with $b<a$, let $G_{a,b}$ be the graph obtained
by attaching a vertex to any $b$ vertices of a complete graph on $a$ vertices (note that ${\rm res}(G_{a,b})=a$); see Figure~\ref{G1G2}(right) for a small example. Let $G^1$, $G^2$, $G^3$ and $G^4$ be the other four graphs illustrated in Figure~\ref{G1G2}.

\begin{figure}[ht]
\begin{center}
\includegraphics[width=120mm]{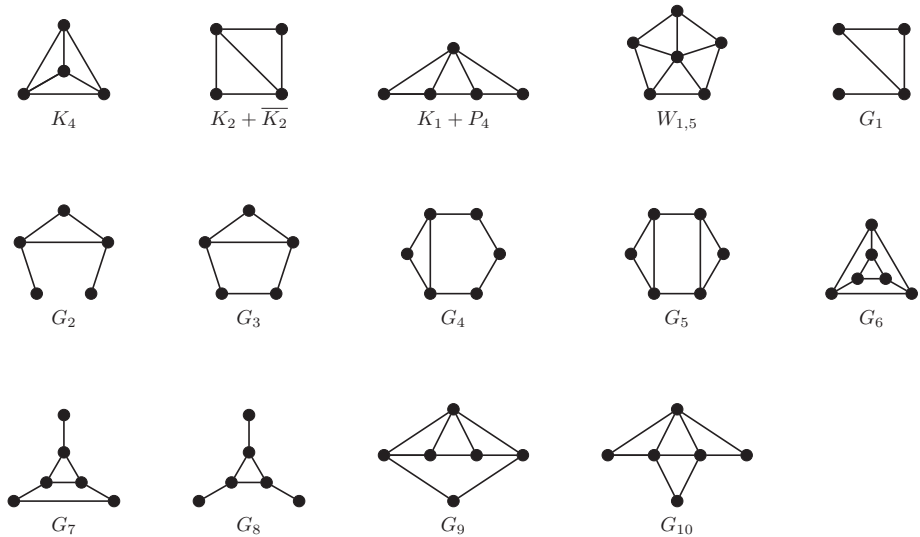}
\caption{The set of graphs $\mathcal{F}_1$.}\label{F1}
\end{center}
\end{figure}

\begin{figure}[ht]
\begin{center}
\includegraphics[width=120mm]{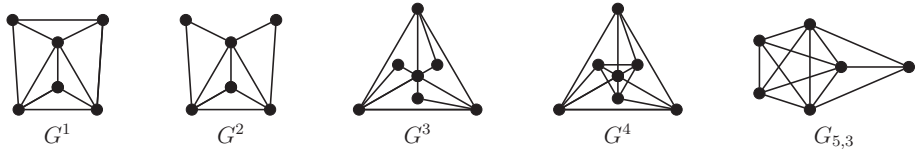}
\caption{The graphs $G^1$,  $G^2$, $G^3$, $G^4$ and $G_{5,3}$.}\label{G1G2}
\end{center}
\end{figure}

\begin{proposition}\label{clique2}
For every graph $G$, the following statements hold.
\begin{enumerate}
\item[(i)] $\omega (G)={\rm res} (G)=1$     $\Longleftrightarrow$ $G\cong K_1$.
\item[(ii)] $\omega (G)={\rm res} (G)=2$     $\Longleftrightarrow$ $G\cong P_n$ for some $n\geq 3$ or $G\cong C_n$ for some odd $n\geq 5$.
\item[ (iii)] $\omega (G)={\rm res} (G)=3$     $\Longleftrightarrow$ $G\in\mathcal{F}_1$.
\item[(iv)] $\omega (G)={\rm res} (G)=4$     $\Longleftrightarrow$ $G\in \{G^1,G^2,G^3,G^4\}$ or $G\cong G_{4,b}$ for some $b<4$.
\item[(v)] $\omega (G)={\rm res} (G)\geq 5$ $\Longleftrightarrow$ $G\cong G_{a,b}$ for some $b<a$ and $5\leq a={\rm res}(G)$.
\end{enumerate}
\end{proposition}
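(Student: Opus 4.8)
The plan is to dispose of the small cases (i) and (ii) immediately and then treat $k:={\rm res}(G)=\omega(G)\ge 3$ by a single structural analysis organised around a fixed maximum clique. For (i), connectedness forces $G\cong K_1$, the only edgeless connected graph, and indeed $\omega(K_1)={\rm res}(K_1)=1$. For (ii), Theorem \ref{res=2} already identifies the graphs with ${\rm res}(G)=2$ as the paths and the odd cycles; imposing $\omega(G)=2$ then keeps exactly the paths $P_n$ with $n\ge 3$ (note ${\rm res}(P_2)=1$) and the odd cycles $C_n$ with $n\ge 5$ (note $\omega(C_3)=3$). The real content is thus (iii)--(v). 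For these, I fix a maximum clique $K=\{v_1,\dots,v_k\}$ and set $W=V(G)\setminus K$. Since $G$ is not complete (Proposition \ref{clique1} would give $\omega={\rm res}+1$ there), we have $W\neq\emptyset$.

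The engine of the argument is a local consequence of Proposition \ref{expr}. For a clique pair $\{v_i,v_j\}$ the $k-2$ other clique vertices all fail to resolve it, so $|\overline{R}(v_i,v_j)|\le k-1$ leaves room for \emph{at most one} vertex of $W$ to fail $\{v_i,v_j\}$. Next I control how a single $u\in W$ behaves: if $u$ is adjacent to $K$, a neighbour gives a length-$2$ path to every non-neighbour, so the distances from $u$ to $K$ take only the values $1,2$; if $d(u,K)=d\ge 2$, then by Lemma \ref{general} they take only the values $d,d+1$. In either case $u$ fails exactly $f(t_u):=\binom{t_u}{2}+\binom{k-t_u}{2}$ clique pairs, where $t_u$ counts the clique vertices in the nearer class, and one computes $f(t)=\binom{k}{2}-t(k-t)$. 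Double counting the incidences (outside vertex, clique pair it fails) against the at-most-one bound yields
$$\sum_{u\in W} f(t_u)\le \binom{k}{2}.$$
Since $f(t)\ge\binom{k}{2}-\lfloor k^2/4\rfloor$, this forces $|W|\le 2$ when $k\ge 5$ and $|W|\le 3$ when $k\in\{3,4\}$.

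For $k\ge 5$ I then rule out $|W|=2$. If $W=\{u_1,u_2\}$, the at-most-one bound says every clique pair is resolved by at least one $u_i$. Recording for each $v\in K$ which of the two distance classes it occupies with respect to $u_1$ and to $u_2$ assigns $v$ a label in $\{0,1\}^2$, and $u_i$ resolves $\{v,v'\}$ exactly when their $i$-th labels differ; hence all $k$ labels must be distinct, forcing $k\le 4$, a contradiction. (The degenerate possibility that some $u_i$ has no neighbour in $K$ is handled by noting that it then resolves precisely the clique pairs $u_2$ does, again forcing $k\le 2$.) Therefore $|W|=1$: a single vertex adjacent to $b$ clique vertices with $1\le b\le k-1$ by maximality of $K$, that is, $G\cong G_{k,b}$ with $b<k$. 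The converse is immediate from $\omega(G_{a,b})=a$ and the stated value ${\rm res}(G_{a,b})=a$.

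For $k\in\{3,4\}$ the same bounds leave $|W|\le 3$, and each outside vertex attaches to $K$ in one of boundedly many ways (an adjacency pattern, or a short path to $K$ constrained by Lemma \ref{general}), so only finitely many candidate graphs remain; running through them while checking connectedness, $\omega=k$, and ${\rm res}=k$ via Proposition \ref{expr} should yield exactly $\mathcal{F}_1$ for $k=3$ and $\{G^1,G^2,G^3,G^4\}\cup\{G_{4,b}:b<4\}$ for $k=4$. I expect this enumeration to be the main obstacle: it is precisely the regime where the exceptional graphs survive, and the delicate point is proving the lists \emph{exhaustive}. One must organise the casework by $|W|$ and by the distance profiles of the outside vertices — including vertices at distance $\ge 2$ from $K$, which the clean $k\ge 5$ argument never has to confront — and in each configuration verify through Proposition \ref{expr} that nothing outside the stated families can keep ${\rm res}(G)$ equal to $\omega(G)$.
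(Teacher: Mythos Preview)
Your argument for (i), (ii), and (v) is correct, and for (v) it is the paper's proof in different language: your $\{0,1\}^2$-labeling of $K$ by distance class relative to $u_1,u_2$ is exactly the paper's partition $K=A_1\cup A_2\cup A_3\cup A_4$ by membership in $N(u)$ and $N(v)$, and ``all labels distinct'' is the paper's conclusion $|A_i|\le 1$. The double-counting inequality $\sum_{u\in W}f(t_u)\le\binom{k}{2}$ is actually not needed for (v): since at most one $W$-vertex can fail any given clique pair, the labeling applied to \emph{any} two vertices of $W$ already yields $k\le 4$, so $|W|\ge 2$ directly contradicts $k\ge 5$. Where your double count does buy something new is the clean bound $|W|\le 3$ (hence $n\le k+3$) for $k\in\{3,4\}$, which the paper never isolates in this form.

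The gap, which you acknowledge, is that (iii) and (iv) are left as an unperformed enumeration. For (iv) the paper executes essentially your plan: the partition forces $|A_i|=1$ for each $i$, which pins down $N(u)\cap K$ and $N(v)\cap K$; a third outside vertex $w$ is then similarly constrained, $n\le 7$ follows, and the few possibilities for $N(w)\cap K$ and for the edges of $\langle\{u,v,w\}\rangle$ are checked one by one. For (iii), however, the paper takes a different route entirely: it invokes Theorem~\ref{res=3}, the full characterisation of graphs with ${\rm res}(G)=3$ proved separately in Section~\ref{sec:section4}, and simply selects those containing a triangle. Your direct enumeration from $n\le 6$ is a legitimate and more self-contained alternative, but $\mathcal{F}_1$ has fourteen members, outside vertices need not be adjacent to $K$, and the edges within $W$ must also be accounted for---so the casework you defer is substantial rather than a formality.
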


\begin{proof}
Statement (i) is straightforward, and Statement (ii) follows immediately from Theorem \ref{res=2} (note that ${\rm res}(P_2)=1$). As a consequence of the study developed in Section \ref{sec:section4} (see Theorem \ref{res=3}) one can easily prove Statement (iii).
Further, if $G$ is isomorphic to either $G^1$, $G^2$ or $G_{a,b}$ with ${\rm res} (G)=a\geq 4$ and $b<a$, then the corresponding statements can also be easily checked.

Consider now a graph $G$ such that $\omega (G)={\rm res} (G)=r\geq 4$. Let $K$ be a maximum clique in $G$. By Proposition \ref{clique1}, the graph $G$ is not a complete graph and so there is a vertex $u\in V(G)\setminus K$ adjacent to some vertex of $K$. If $n=r+1$ then $G\cong G_{r,b}$ for some  $b<r$.
Otherwise, there is a vertex $v\in V(G)\setminus (K\cup\{u\})$
 such that either $v\in N(u)$ or $v$ is adjacent to some vertex of $K$.

Let $A_1=(N(u)\setminus N(v))\cap K$, $A_2=(N(v)\setminus N(u))\cap K$, $A_3=N(u)\cap N(v)\cap K$ and $A_4=K\setminus (N(u)\cup N(v))$. Note that any set $A_i$ may be empty, and  $K$ is the disjoint union of the four sets. We claim that $|A_i|\leq 1$ for every $1\leq i\leq 4$.
Indeed, suppose on the contrary that there are two different vertices $x, y$ in, say $A_1$.
Clearly, $d(v,x)=d(v,y)=2$ and so the set $S=(K\setminus \{x,y\})\cup \{u,v\}$ is not a resolving set; a contradiction with $|S|=r$.
Hence, $|A_1|\leq 1$ and a similar argument applies to the remaining sets $A_i$.

Since $\sum_{i=1}^4 |A_i|=|K|=r\geq 4$ and $|A_i|\leq 1$ then $|K|=r=4$ and $|A_i|=1$ for every $1\leq i \leq 4$. Hence, we can assume that $N(u)\cap K=\{u_1,u_2\}$ and $N(v)\cap K=\{u_2,u_3\}$ for $K=\{u_1,u_2,u_3,u_4\}$.

If $n=6$ then
$G$ is isomorphic to either $G^1$ or $G^2$ (depending on whether $u$ and $v$ are adjacent or not). Suppose now that $n\geq 7$ and let $w\in V(G)\setminus (K\cup \{u,v\})$. Considering the analogous sets $A_i$ but for the vertices $u,w$ and $v,w$, we deduce that either $N(w)\cap K=\{u_1,u_3\}$ or $N(w)\cap K=\{u_2,u_4\}$. This implies $n=7$ (otherwise there is a vertex $z\in V(G)\setminus (K\cup \{u,v,w\})$ such that $A_3=N(w)\cap N(z)\cap K=\emptyset$ and $A_4=K\setminus (N(w)\cup N(z))=\emptyset$).

We next show that it cannot be the case that $N(w)\cap K=\{u_1,u_3\}$ by providing a set $S$ with $|S|=4$ that is not a resolving set. Consider the induced subgraph $H=\langle\{u,v,w\}\rangle$. If $|E(H)|\in \{0,1\}$, say $E(H)=\{\{u,v\}\}$ for $|E(H)|=1$, then no vertex of $S=\{u_1,u_3,u,v\}$ resolves the pair $\{u_4,w\}$. If $|E(H)|\in\{2,3\}$ (assume  $E(H)=\{\{u,v\},\{v,w\}\}$ for $|E(H)|=2$)
then no vertex of $S=\{u_2,u_3,u,w\}$ resolves the pair $\{u_1,v\}$ .

When $N(w)\cap K=\{u_2,u_4\}$, we obtain the graphs $G^3$ and $G^4$  for $|E(H)|=0$ and
$|E(H)|=3$, respectively. If $|E(H)|\in \{1,2\}$, say $E(H)=\{\{u,v\}\}$ or $E(H)=\{\{u,v\},\{v,w\}\}$, then either
 $\{u_2,u_3,u,w\}$ or $\{u_1,u_2,v,w\}$ is not a
resolving set; a contradiction with $r=4$.\end{proof}

\subsection{Order}

The following proposition gives bounds on the order of a graph in terms of its resolving number, and also provides an alternative proof for Theorem~3.7 of \cite{MDUDRN} which states that
the set of graphs with resolving number $a\geq 4$ is finite.

\begin{proposition}\label{n}
If $G$ is neither a path nor a cycle then
$${\rm res}(G)+1\leq n \leq \left\{ \begin{tabular}{lcl}
 $3{\rm res} (G)-3$ &   if  & $g (G)=3$, \\
 $ 4{\rm res} (G)-4$ &   if  & $g (G)=4$,\\
 $5{\rm res} (G)-5 $&   if  & $g (G)=5$, \\
  $5{\rm res} (G)-9$ &   if  & $g (G)>5$ \, and \, $\Delta (G) >3,$\\
  $6{\rm res}(G) -8$ &  if & $g (G)> 5$ \,  and \, $\Delta (G)=3.$\\
\end{tabular}
\right.$$   Moreover, the lower bound and the upper bounds for $g (G)\in\{3,5\}$ are tight.
\end{proposition}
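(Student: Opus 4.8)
The plan is to separate the lower and upper bounds. The lower bound $\mathrm{res}(G)+1\le n$ is immediate from the chain $\mathrm{res}(G)\le n-1$ recorded before Theorem~\ref{res=2}, so all the content sits in the upper bounds. For those I would run Lemma~\ref{key} with a single well-chosen family $\mathcal{P}$ of pairs together with a partition of $V(G)$ engineered so that the weighted count $\sum_i|V_i|\,k_i$ is at least $n$; then Lemma~\ref{key} reads $n\le|\mathcal{P}|\,(\mathrm{res}(G)-1)$, which is exactly the shape of every stated bound. The recurrent gadget is a \emph{shortest} cycle $C=(u_1,\dots,u_g)$ of length $g=g(G)$, which is isometric, combined with Lemma~\ref{general}: since $d(C)=\lfloor g/2\rfloor$, the distances from any vertex to the $u_i$ are confined to a short window, and this is what forces certain pairs to be left unresolved.

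The cleanest cases are $g=3$ and $g=5$, which I would dispatch first by a parity argument. Fix a vertex $v$ and set $f(i)=d(v,u_i)$. Adjacency gives $|f(i)-f(i+1)|\le 1$, while the telescoping identity $\sum_{i=1}^{g}\bigl(f(i+1)-f(i)\bigr)=0$ holds around the cycle (indices mod $g$). If $v$ resolved \emph{every} edge pair $\{u_i,u_{i+1}\}$, each summand would equal $\pm 1$, so a sum of $g$ terms each $\pm 1$ would vanish, which is impossible for odd $g$. Hence for $g\in\{3,5\}$ every vertex fails to resolve at least one of the $g$ edge pairs; taking $\mathcal{P}$ to be these $g$ pairs and the trivial partition ($\ell=1$, $k_1=1$) in Lemma~\ref{key} yields $n\le g\,(\mathrm{res}(G)-1)$, i.e.\ $n\le 3\,\mathrm{res}(G)-3$ and $n\le 5\,\mathrm{res}(G)-5$.

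The delicate cases are $g=4$ and $g>5$, and this is where I expect the real work. For $g=4$ the parity obstruction disappears: a vertex may resolve all four edge pairs, with distance pattern $t,t+1,t,t+1$ or $t,t+1,t+2,t+1$ around the cycle, but in either pattern a \emph{diagonal} pair $\{u_1,u_3\}$ or $\{u_2,u_4\}$ is unresolved. The plan is to combine edge and diagonal pairs and partition $V(G)$ by distance to $C$, charging each vertex at least one unresolved pair while controlling the effective size of $\mathcal{P}$ down to $4$. For $g>5$ a single cycle only gives the weak bound $g\,(\mathrm{res}(G)-1)$, so instead I would exploit that large girth makes balls tree-like: the absence of short cycles limits how distance windows overlap, and the branching is governed by $\Delta(G)$, which is precisely why the bound splits at $\Delta=3$. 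Here one selects $\mathcal{P}$ along a geodesic (half of the shortest cycle, of length $\lfloor g/2\rfloor\ge 3$) and partitions by distance to it; the additive corrections $-9$ and $-8$ arise because the geodesic endpoints and the low-degree branching force some vertices to fail strictly more than one pair, so that $\sum_i|V_i|\,k_i\ge n+\text{corr}$. Pinning down these constants, rather than the leading factors $5$ and $6$, is the main obstacle.

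Finally, for tightness I would exhibit explicit extremal families. The lower bound is attained by the complete graphs $K_n$ with $n\ge 4$, where $\mathrm{res}(K_n)=n-1$ and $g=3$. For $g=5$ the natural candidate is a distance-regular graph of girth $5$ whose resolving number forces $n=5\,\mathrm{res}(G)-5$, and I would verify equality for the Petersen graph ($n=10$, $\mathrm{res}=3$). For $g=3$ one needs a girth-$3$ graph realising $n=3\,\mathrm{res}(G)-3$; the smallest instance ($\mathrm{res}=3$, $n=6$) can be read off from the characterisation of resolving number $3$ developed in Section~\ref{sec:section4}, and I would present one such graph together with its scaled-up analogues.
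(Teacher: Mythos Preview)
Your parity argument for odd $g\in\{3,5\}$ is correct and is a pleasant variant of what the paper does: the paper uses pigeonhole on the full set $\mathcal{P}_2(C)$ (for $g=5$ this is $|\mathcal{P}|=10$ with $k_1=2$), whereas you use only the $g$ edge pairs with $k_1=1$; both yield $n\le g\,(\mathrm{res}(G)-1)$. The tightness witnesses are also right (the paper's $G_{14}$ is indeed the Petersen graph, and for $g=3$ the paper gives the infinite family consisting of a triangle with a path of length $r-2$ attached to each vertex).

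For $g=4$ you have correctly observed that any vertex resolving all four edge pairs must fail a diagonal, but this only gives $n\le 6\,(\mathrm{res}(G)-1)$ via Lemma~\ref{key} with all six pairs. Your phrase ``controlling the effective size of $\mathcal{P}$ down to $4$'' hides exactly the missing step. The paper does not partition by distance to $C$; it partitions $V(G)$ into $V_1=R(u_1,u_3)\cap R(u_2,u_4)$ and $V_2=V(G)\setminus V_1$, and then splits into two cases. If $|V_1|\le|V_2|$, apply Lemma~\ref{key} with the \emph{two} diagonal pairs and $k_2=1$ to get $|V_2|\le 2(\mathrm{res}(G)-1)$. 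If $|V_1|>|V_2|$, a short distance check shows each $u\in V_1$ fails at least \emph{two} of the four edge pairs, and Lemma~\ref{key} with those four pairs and $k_1=2$ gives $2|V_1|\le 4(\mathrm{res}(G)-1)$. Either way $n\le 4\,(\mathrm{res}(G)-1)$.

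The case $g>5$ is a genuine gap: your geodesic-inside-the-shortest-cycle plan is headed in the wrong direction, and it is not clear how the constants $-9$ and $-8$ (or even the leading coefficients $5$ and $6$) would emerge from it. The paper \emph{abandons the shortest cycle entirely} here and instead uses a star. Pick $u_0$ of degree $\ge 4$ (resp.\ $=3$) and set $A=\{u_0,u_1,\dots,u_4\}$ (resp.\ $N[u_0]$); since $g>3$ there are no edges among the $u_i$, so $d(A)=2$, and the same pigeonhole as in the $g=5$ case shows every vertex fails at least $2$ (resp.\ $1$) pairs of $\mathcal{P}_2(A)$. The additive corrections come from the extra failures at the star itself: $u_0$ fails all $\binom{4}{2}=6$ (resp.\ $\binom{3}{2}=3$) pairs among its neighbours, and each $u_i$ fails the $\binom{3}{2}=3$ pairs among the other neighbours. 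Feeding these refined counts into Lemma~\ref{key} gives $10(\mathrm{res}(G)-1)\ge 2(n-5)+6+12$ and $6(\mathrm{res}(G)-1)\ge (n-1)+3$, which are exactly $n\le 5\,\mathrm{res}(G)-9$ and $n\le 6\,\mathrm{res}(G)-8$.
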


\begin{proof}
By definition ${\rm res}(G)+1\leq n$ for every graph $G$, and the complete graph $K_n$ attains the bound (see \cite{upper}). For the upper bound, we distinguish several cases.

\emph{Case 1.} $g(G)=3$: Let $C=(u_1,u_2,u_3)$ be a 3-cycle in $G$, and $u\in V(G)$. Since $d(C)=1$, by Lemma~\ref{general}, $d (u,u_i)\in \{d(u,C), d(u,C)+1\}$ for every  $1\leq i\leq 3$. Hence, there is at least one pair $\{u_i,u_j\}\in \mathcal{P}_2(C)$ which is not resolved by vertex $u$.
Lemma~\ref{key} gives the expected bound by setting $\mathcal{P}=\mathcal{P}_2(C)$, $V_1=V(G)$ ($\ell=1$) and $k_1=1$.

The graph consisting of a 3-cycle with one path of length $r-2$  attached to each vertex of the cycle has resolving number $r$ and $3(r-1)$ vertices. This shows that the bound is tight.

\emph{Case 2.} $g(G)=4$: Let $C=(u_1,u_2,u_3,u_4)$ be a cycle of minimum length in $G$, $V_1=R(u_1,u_3)\cap R(u_2,u_4)$
 and $V_2=V(G)\setminus V_1$. We now distinguish two cases.

\,

\emph{Case 2.1.} $|V_1|\leq |V_2|$: Every vertex $u\in V_2$ does not resolve at least one pair of $\mathcal{P}=\{\{u_1,u_3\},\{u_2,u_4\}\}$. By Lemma~\ref{key}, taking $k_1=0$ and $k_2=1$ ($\ell=2$) we have
$2({\rm res}(G)-1)\geq |V_2|\geq n/2$.

\,

\emph{Case 2.2.}: $|V_1|> |V_2|$: Let $u\in V_1$ and $d(u,C)=d$. Assume that $d(u,u_1)=d$. Then $d(u,u_2), d(u,u_4)\in \{d,d+1\}$. Since
$u\in R(u_2,u_4)$ then either $d(u,u_2)=d$ or $d(u,u_4)=d$. Moreover, $u\in R(u_1,u_3)$ and so $d(u,u_3)=d+1$. Therefore, there are at least two pairs of $\mathcal{P}=\{\{u_1,u_2\},\{u_2,u_3\},\{u_3,u_4\},\{u_4,u_1\}\}$ which are not resolved by vertex $u$. By Lemma~\ref{key}, setting $k_1=2$ and $k_2=0$ ($\ell=2$) we have $4({\rm res}(G)-1)\geq 2|V_1|>n$.

 \,

\emph{Case 3.} $g(G)=5$: A 5-cycle $C=(u_1,u_2,u_3,u_4,u_5)$ has diameter 2 and so  Lemma~\ref{general} gives $d (u,u_i)\in \{d(u,C), d(u,C)+1, d(u,C)+2\}$ for every  $u\in V(G)$ and $1\leq i\leq 5$. Hence, there are at least two pairs of $\mathcal{P}_2(C)$ which are not resolved by vertex $u$. The result follows by Lemma~\ref{key}, taking $\mathcal{P}=\mathcal{P}_2(C)$, $V_1=V(G)$ ($\ell=1$) and $k_1=2$.
 The graph $G_{14}$ depicted in Figure~\ref{F2} (see Section \ref{sec:section4}) shows that the bound is tight.

\,

\emph{Case 4.} $g(G)>5$ and $\Delta (G) >3$: Let $u_0\in V(G)$ with $\delta(u_0)\geq 4$, and consider four of its neighbours, say
$u_1, u_2, u_3, u_4$. Let $A=\{u_0,u_1,u_2,u_3,u_4\}$. Arguing as in Case 3,
since $d(A)=2$ then every vertex $u\in V(G)$ does not resolve at least two pairs of $\mathcal{P}_2(A)$.
Moreover, $u_0$ resolves no pair of $\mathcal{P}_2(A\setminus \{u_0\})$, and $u_i$  resolves
no pair of $\mathcal{P}_2(A\setminus\{u_0,u_i\})$ for every $1\leq i \leq 4$. By Lemma~\ref{key}, setting $\mathcal{P}=\mathcal{P}_2(A)$, $V_1=V(G)\setminus A$, $V_2=\{u_0\}$, $V_3=A\setminus \{u_0\}$ ($\ell=3$), $k_1=2$, $k_2=6$ and $k_3=3$, we have $10({\rm res}(G)-1)\geq 2(n-5)+6+12=2n+8.$

\,

\emph{Case 5.} $g(G)>5$ and $\Delta (G)=3$: Let $u_0\in V(G)$ with   $\delta(u_0)=3$ and neighbours $u_1, u_2, u_3$. Reasoning as above, every vertex $u\in V(G)$ does not resolve at least one pair of $\mathcal{P}_2(N[u_0])$. Further, $u_0$ resolves no pair of $\mathcal{P}_2(N(u_0))$. Lemma~\ref{key} gives the expected bound by taking $\mathcal{P}=\mathcal{P}_2(N[u_0])$, $V_1=V(G)\setminus \{u_0\}$, $V_2=\{u_0\}$ ($\ell=2$), $k_1=1$ and $k_2=3$.
\end{proof}

We now provide a tight bound for trees, characterizing those that attain the bound.

\begin{proposition}\label{ordertree} If $T$ is a tree of order $n$ that is not a path then
$n\leq 3 {\rm res} (T)-5$, and equality holds if and only if $T\cong S_{a,a,a}$ with  $a={\rm res}(T)-2\geq 1$.
\end{proposition}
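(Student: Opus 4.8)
Write $r={\rm res}(T)$; since $T$ is a tree that is not a path, Theorem~\ref{res=2} gives $r\geq 3$. The plan is to locate inside $T$ a single vertex of degree at least $3$ that carries two pendant paths to leaves, to exploit the non-resolving behaviour of the pair of equidistant vertices on those two paths, and finally to convert the resulting estimate into the diameter bound of Proposition~\ref{diametertree}.

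First I would produce a branch vertex with two legs. Calling a \emph{leg} a path from a vertex of degree at least $3$ to a leaf whose internal vertices all have degree $2$, I root $T$ at any vertex and pick a branch vertex $w$ of maximum depth. Since no branch vertex lies strictly below $w$, every subtree hanging from a child of $w$ is a path, hence a leg; as $\delta(w)\geq 3$ forces at least two children, $w$ has two legs $A=(a_1,\dots,a_p)$ and $B=(b_1,\dots,b_q)$ with $a_i,b_i$ at distance $i$ from $w$ and $p\leq q$.

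The core step bounds $n$ through the pair $\{a_p,b_p\}$ (well defined since $p\leq q$), whose members both lie at distance $p$ from $w$. Any vertex $u\notin A\cup B$ reaches $a_p$ and $b_p$ only through $w$, so $d(u,a_p)=d(u,w)+p=d(u,b_p)$ and $u$ fails to resolve the pair; thus $\overline{R}(a_p,b_p)\supseteq V(T)\setminus(A\cup B)$ and $|\overline{R}(a_p,b_p)|\geq n-p-q$. Proposition~\ref{expr} bounds this quantity by $r-1$, while the concatenation of $A$, $w$ and $B$ is a path of length $p+q$, whence $p+q\leq d(T)\leq 2r-4$ by Proposition~\ref{diametertree}. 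Combining the two estimates yields $n\leq (p+q)+(r-1)\leq 3r-5$.

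For the equality clause I would note that $n=3r-5$ forces $p+q=2r-4$, and hence $d(T)=2r-4$. The characterization in Proposition~\ref{diametertree} then gives $T\cong S_{a',r-2,r-2}$ with $a'\leq r-2$, and counting its $a'+2(r-2)+1$ vertices against $3r-5$ pins down $a'=r-2$, that is, $T\cong S_{a,a,a}$ with $a=r-2\geq 1$. The converse is a direct check: Proposition~\ref{diametertree} identifies ${\rm res}(S_{a,a,a})=a+2$, so its $3a+1$ vertices equal $3r-5$. The main obstacle is the first step, namely guaranteeing a branch vertex incident to two full legs, together with confirming that the equality case collapses exactly onto the diameter-extremal balanced spiders rather than onto some $S_{a',r-2,r-2}$ with $a'<r-2$.
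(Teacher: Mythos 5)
Your argument is correct, but it takes a genuinely different route from the paper's. The paper fixes a vertex $u$ of degree at least $3$ with neighbours $u_1,u_2,u_3$, partitions $V(T)$ into the branch sets $U_1,U_2,U_3$ and the remainder $U$, and observes that $U_i\cup U\subseteq\overline{R}(u_j,u_k)$, hence $|U_i|+|U|\leq {\rm res}(T)-1$ for each $i$; summing the three inequalities gives $n\leq 3({\rm res}(T)-1)-2|U|\leq 3{\rm res}(T)-5$, and the equality analysis ($|U|=1$, each $|U_i|={\rm res}(T)-2$, each $\langle U_i\rangle$ a path) is carried out inside the same proof, so the argument is self-contained. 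You instead extract a single unresolved pair $\{a_p,b_p\}$ at equal depth on two legs of a deepest branch vertex, get $n-p-q\leq {\rm res}(T)-1$ from Proposition~\ref{expr}, and then import $p+q\leq d(T)\leq 2{\rm res}(T)-4$ together with its equality characterization from Proposition~\ref{diametertree}; this is shorter but leans on the diameter result (which the paper proves independently, so nothing is circular). The steps you flagged as potential obstacles do go through: the deepest branch vertex has at least two pendant legs because no branch vertex lies below it, and in the equality case the vertex count $a'+2(r-2)+1=3r-5$ forces $a'=r-2$. The only imprecision is the claim that Proposition~\ref{diametertree} ``identifies'' ${\rm res}(S_{a,a,a})=a+2$ --- it does not compute resolving numbers of spiders --- but this is harmless, since the converse implication only needs the arithmetic $n=3a+1=3({\rm res}(T)-2)+1$ under the stated hypothesis $a={\rm res}(T)-2$.
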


\begin{proof}
Let $u\in V(T)$ such that $\delta(u)\geq 3$ and consider three of its neighbours, say $u_1,u_2,u_3$. We can partition $V(T)$
into four subsets $U, U_i$ with  $1\leq i\leq 3$, where $U_i$ contains all vertices $v\in V(T)$ such that $u_i$ belongs to the $u$-$v$ path, and $U=V(T)\setminus \cup_{i=1}^3 U_i$. Note that $u_i\in U_i$ and $u\in U$.
Clearly, no vertex of $U_i$ resolves the pair $\{u_{j},u_{k}\}$ for $i\neq j\neq k$. Moreover, every vertex of $U$ resolves no pair of $\mathcal{P}_2(\{u_1,u_2,u_3\})$. Since every subset of vertices with cardinality at least ${\rm res}(T)$ is a resolving set of $T$ we can conclude that for $1\leq i\leq 3$,
\begin{equation}
|U_i|+|U|\leq {\rm res}(T)-1.
\end{equation}
Hence, $\sum_{i=1}^3|U_i|+3|U|\leq 3({\rm res}(T)-1)$ and so
\begin{equation}
n\leq 3({\rm res}(T)-1)-2|U|\leq 3({\rm res}(T)-1)-2.
\end{equation}

If $T$ is isomorphic to $S_{a,a,a}$ with  $a={\rm res}(T)-2$ then $n=3{\rm res}(T)-5$.  Consider now a tree $T$ such that $n= 3 {\rm res} (T)-5$. Proceeding as above, one can partition $V(T)$  into the subsets $U,U_i$ with $1\leq i\leq 3$. Since $T$ satisfies expression (2) then $n$ attains its maximum value when $|U|= 1$, i.e., $U=\{u\}$.  Therefore, $$3{\rm res}(T)-5=n=\sum_{i=1}^3|U_i|+1.$$ By expression (1) we have
 $|U_i|+1\leq {\rm res}(T)-1$ which leads to $|U_i|={\rm res}(T)-2$ for $1\leq i\leq 3$.

Clearly, the induced subgraph $\langle U_i \rangle$  is a path in $T$ for every $1\leq i\leq 3$. Indeed, if it were the case that two vertices $x,y\in U_i$ verify that $d(u,x)=d(u,y)$, then the set $U_{j}\cup U_{k}\cup \{u\}$ for $i\neq j\neq k$ would not be a resolving set; a contradiction with $|U_{j}|+|U_{k}|+1=2{\rm res}(T)-3\geq {\rm res}(T)$ since, by Theorem \ref{res=2}, ${\rm res}(T)\geq 3$. Hence,
$T$ is a tree which consists of three paths of length ${\rm res}(T)-2$ attached at a single vertex, i.e., $T\cong S_{a,a,a}$  with $a ={\rm res}(T)-2\geq 1$.
\end{proof}

\subsection{Maximum degree}

Jannesari and Omoomi~\cite{Jannesarichar}  proved that  $\Delta (G)\leq 2^{r-1}+r+1$ for $r={\rm res}(G)$. Here, we improve this exponential bound providing a linear bound.

\begin{proposition}\label{Delta}
If $G$ is neither a path nor a cycle then
$$\Delta (G) \leq \left\{ \begin{array}{lcc}
3{\rm res}(G)-4 &   if  & g (G)= 3, \\
{\rm res}(G)  &  if & g (G)>3,
\end{array}
\right.$$
and both bounds are tight.\end{proposition}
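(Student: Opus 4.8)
The plan is to split along the girth, since the two regimes require genuinely different mechanisms. For the case $g(G)>3$ I would argue directly on a vertex of maximum degree. Let $u_0$ have degree $\Delta$ and write $N(u_0)=\{u_1,\dots,u_\Delta\}$. Because $g(G)>3$ no two neighbours of $u_0$ are adjacent, so any two of them lie at distance exactly $2$ (the path of length $2$ through $u_0$, and nothing shorter). Assuming for contradiction that $\Delta\geq {\rm res}(G)+1=:r+1$, I would test the set $S=\{u_0,u_1,\dots,u_{r-1}\}$, which has exactly $r$ vertices and hence must be a resolving set. Yet no vertex of $S$ resolves the pair $\{u_r,u_{r+1}\}$: the vertex $u_0$ is at distance $1$ from both, while each $u_i$ with $1\leq i\leq r-1$ is at distance $2$ from both. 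This contradicts $S$ being resolving, giving $\Delta\leq r$. Note that $r\geq 3$ by Theorem~\ref{res=2}, so $u_1,\dots,u_{r+1}$ are genuinely distinct and the argument is not vacuous.

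For $g(G)=3$ this direct idea breaks down, since neighbours of $u_0$ may be adjacent and so resolve pairs among themselves; indeed the purely combinatorial (adjacency-only) version of the argument reproduces nothing better than the exponential bound $2^{r-1}+r+1$ of Jannesari and Omoomi. Instead I would bound $\Delta$ indirectly through the order. Trivially $\Delta(G)\leq n-1$, and Proposition~\ref{n} gives $n\leq 3{\rm res}(G)-3$ when $g(G)=3$; combining these yields $\Delta\leq 3{\rm res}(G)-4$. This is short, and it already reveals the shape of the extremal graphs: equality forces simultaneously $\Delta=n-1$ (a universal vertex) and $n=3{\rm res}(G)-3$.

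For tightness when $g(G)>3$ the star $K_{1,\Delta}$ works: it is a tree (girth $>3$) and, for $\Delta\geq 3$, neither a path nor a cycle. Its leaves are mutually twin, every pair of leaves is resolved only by the two leaves themselves, so $|\overline{R}|=n-2=\Delta-1$ and ${\rm res}(K_{1,\Delta})=\Delta$, meeting the bound. For $g(G)=3$, the remark above shows a tight graph must have a universal vertex with $n=3{\rm res}(G)-3$; I would present the wheel $W_5$ (a hub joined to a $5$-cycle) as the witness, since it has a universal vertex, $\Delta=5=3\cdot 3-4$, and a direct check of $\overline{R}$ on its three pair-types (hub–rim, adjacent rim, and rim at distance $2$) gives $|\overline{R}|=2$ throughout, whence ${\rm res}(W_5)=3$.

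The main obstacle is tightness in the girth-$3$ case, not the upper bounds. Because equality forces a universal vertex $u_0$ with $n=3{\rm res}(G)-3$, it translates into the demand that $G-u_0$ be essentially $({\rm res}(G)-1)$-regular while no two of its vertices share two common neighbours, i.e. $C_4$-free; since $C_4$-free graphs cannot sustain degrees near a third of their order, such graphs exist only for small resolving number. I would therefore claim tightness of the $g=3$ bound through an explicit small witness such as $W_5$ rather than an infinite family, and I would take care to verify the boundary value $r=3$ in the girth $>3$ argument, after which the remaining steps are routine.
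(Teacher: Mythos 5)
Your proposal is correct and follows essentially the same route as the paper: for $g(G)>3$ the paper likewise exhibits the non-resolving set $N[u_0]\setminus\{u_1,u_2\}$ of size $\Delta(G)-1$ (your contrapositive with $S=\{u_0,u_1,\dots,u_{r-1}\}$ failing on $\{u_r,u_{r+1}\}$ is the same mechanism), and for $g(G)=3$ it combines $\Delta(G)\leq n-1$ with Proposition~\ref{n} exactly as you do. The tight examples ($K_{1,a}$ and the wheel $W_{1,5}$) also coincide with the paper's.
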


\begin{proof}
Proposition \ref{n} gives the bound for $g(G)=3$
since $\Delta (G)\leq n-1$. The wheel graph $W_{1,5}$ (see Figure \ref{F1}) attains the bound.

Suppose now that $g (G) >3$. Let $u_0$ be a vertex of degree $\Delta (G)$, and $u_1, u_2\in N(u_0)$.
Clearly, the induced subgraph $\langle N[u_0] \rangle$ is a star on $\Delta (G)+1$ vertices (otherwise $G$ would contain a triangle). Then, no vertex of $S=N[u_0]\setminus\{u_1,u_2\}$ resolves the pair $\{u_1,u_2\}$ and so $S$ is not a resolving set of $G$. Therefore,
$|S|=\Delta (G)-1\leq {\rm res} (G)-1$. The star $K_{1,a}$ on $a+1$ vertices proves that the bound is tight since ${\rm res}(K_{1,a})=\Delta(K_{1,a})=a$.
\end{proof}

We now show that only stars attain the bound when restricting the preceding result to trees.

\begin{proposition}\label{deltatree} If $T$ is a tree of order $n$ that is not a path then $\Delta(T)\leq {\rm res}(T)$,
 and equality holds if and only if $T$ is isomorphic to a star $K_{1,a}$ with $a={\rm res}(T)\geq 3$.
\end{proposition}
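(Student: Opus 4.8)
The plan is to prove both the inequality $\Delta(T) \leq {\rm res}(T)$ and the characterization of equality. The inequality itself is already furnished by Proposition~\ref{Delta}, since a tree has girth $g(T) = \infty > 3$; thus the real work lies entirely in the characterization of the extremal case. First I would verify the easy direction: if $T \cong K_{1,a}$ with $a \geq 3$, then a direct computation gives ${\rm res}(K_{1,a}) = a$ (every pair of leaves is unresolved by any set of leaves, so any resolving set must contain all but at most one leaf), and clearly $\Delta(K_{1,a}) = a$, so equality holds. The substance is the forward direction: assume $\Delta(T) = {\rm res}(T)$ and deduce that $T$ must be a star.

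For the forward direction, I would let $r = {\rm res}(T)$, pick a vertex $u_0$ of maximum degree $\Delta(T) = r$, and reconsider the argument of Proposition~\ref{Delta}. In that proof one shows that for any two neighbours $u_1, u_2 \in N(u_0)$, the set $S = N[u_0] \setminus \{u_1, u_2\}$ fails to resolve $\{u_1, u_2\}$, giving $|S| = \Delta(T) - 1 \leq r - 1$. Under the equality hypothesis $\Delta(T) = r$, this inequality becomes tight, meaning $S$ is a non-resolving set of the maximum possible size $r - 1$; equivalently $\overline{R}(u_1,u_2)$ has size exactly $r - 1 = {\rm res}(T) - 1$ and $S = N[u_0] \setminus \{u_1, u_2\} \subseteq \overline{R}(u_1, u_2)$. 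The goal is then to show that any additional vertex outside $N[u_0]$ would enlarge some non-resolving set beyond $r - 1$, contradicting Proposition~\ref{expr}.

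The key step is to argue that $V(T) = N[u_0]$, i.e. $T$ has no vertices beyond the closed neighbourhood of $u_0$. Suppose for contradiction there exists $w \in V(T) \setminus N[u_0]$. Since $T$ is a tree, the unique $u_0$-$w$ path enters $N(u_0)$ through exactly one neighbour, say $u_1$, so that $u_1$ lies on the $u_0$-$w$ path. I would then show that for a suitable pair of the \emph{other} neighbours $u_i, u_j$ (with $i, j \neq 1$, which exist since $\delta(u_0) = r \geq 3$ by Theorem~\ref{res=2} applied to trees), the vertex $w$ also fails to resolve $\{u_i, u_j\}$: indeed $d(w, u_i) = d(w, u_0) + 1 = d(w, u_j)$ because both $u_i$ and $u_j$ are reached from $w$ only through $u_0$. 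Consequently $\overline{R}(u_i, u_j)$ contains $N[u_0] \setminus \{u_i, u_j\}$ together with $w$, so $|\overline{R}(u_i, u_j)| \geq (\Delta(T) - 1) + 1 = r$, whence ${\rm res}(T) \geq r + 1$ by Proposition~\ref{expr}; this contradicts ${\rm res}(T) = r$. Therefore no such $w$ exists, $V(T) = N[u_0]$, and since $\langle N[u_0]\rangle$ is a star on $\Delta(T) + 1 = r + 1$ vertices (as $T$ is triangle-free), we conclude $T \cong K_{1,r}$ with $r \geq 3$.

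The main obstacle I anticipate is the careful distance bookkeeping in the final contradiction: I must ensure that the chosen pair $\{u_i, u_j\}$ genuinely lies in the branches of $u_0$ \emph{not} containing $w$, so that both distances from $w$ route through $u_0$ and are equal, and simultaneously that $w$ itself is distinct from the vertices already accounted for in $\overline{R}(u_i, u_j)$. Because $T$ is a tree, these distance relations follow cleanly from uniqueness of paths, but one should state precisely why exactly one neighbour $u_1$ carries the $u_0$-$w$ path and why at least two further neighbours remain available (using $\delta(u_0) = r \geq 3$). Once this is pinned down, the size count $|\overline{R}(u_i,u_j)| \geq r$ is immediate and the contradiction with Proposition~\ref{expr} closes the argument.
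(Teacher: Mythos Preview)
Your proposal is correct and follows essentially the same approach as the paper. Both proofs derive the inequality from Proposition~\ref{Delta}, verify equality for stars directly, and for the converse pick a vertex outside $N[u_0]$ lying in the branch of some $u_1$ to show that a pair $\{u_i,u_j\}$ of other neighbours is unresolved by a set of size ${\rm res}(T)$; the only cosmetic differences are that the paper takes the extra vertex at distance exactly~$2$ and phrases the contradiction as exhibiting a non-resolving set of size ${\rm res}(T)$, while you allow any $w\notin N[u_0]$ and phrase it via $|\overline{R}(u_i,u_j)|\geq {\rm res}(T)$ and Proposition~\ref{expr}.
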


\begin{proof}
The fact that $\Delta(T)\leq {\rm res}(T)$ follows immediately from Proposition~\ref{Delta} since $g(T)$ is defined to be infinity. Also, as mentioned above ${\rm res}(K_{1,a})=\Delta(K_{1,a})=a$. Thus, it suffices to prove that a tree $T$ (not being a path) with  $\Delta(T)={\rm res}(T)$
is isomorphic to  $K_{1,a}$ with $a= {\rm res}(T)\geq 3$.

Let $u_0$ be a vertex of degree $\Delta (T)$, and  $N[u_0]=\{u_0, u_1,...,u_{\Delta (T)}\}$ its closed neighbourhood. Suppose that there is a vertex $u\in V(T)$ such that $d(u,u_0)=2$. Assume, without loss of generality, that $u\in N(u_1)$. Then, the  set $S=(N[u_0]\setminus\{u_2,u_3\})\cup\{u\}$ is not a resolving set since the pair $\{u_2,u_3\}$ is resolved by no vertex of $S$.
This contradicts  $|S|=\Delta (T)={\rm res} (T)$ and so there does not exist such a vertex $u$, which implies that $T$ is a star $K_{1,a}$ with
$a=\Delta (T)={\rm res}(T)\geq 3$.
 \end{proof}

\section{Characterization of the graphs $G$ with ${\rm res}(G)=3$}\label{sec:section4}

As a natural extension of Theorem \ref{res=2}, we now determine all graphs with resolving number 3 using as main tools the relations obtained in Section \ref{sec:section3}. We begin with three technical lemmas.

\begin{lemma}\label{girth3}
If $g (G)=\Delta (G)={\rm res} (G)=3$ then $G\in \mathcal{F}_1$.
\end{lemma}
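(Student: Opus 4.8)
The plan is to prove that a graph $G$ with $g(G)=\Delta(G)={\rm res}(G)=3$ must be one of the $14$ graphs in $\mathcal{F}_1$, by exhausting the very limited structure these constraints impose. The key observation is that $\Delta(G)=3$ bounds the degrees severely, while ${\rm res}(G)=3$ together with Proposition~\ref{n} (Case~1, since $g(G)=3$) gives $n\leq 3\cdot 3-3=6$. So I would first invoke these results to conclude that $G$ has order at most $6$ and maximum degree $3$. This is the decisive reduction: we are searching only among connected graphs on at most $6$ vertices with $\Delta=3$, $g=3$, and ${\rm res}=3$, a finite list that can be enumerated.

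First I would fix a triangle $C=(u_1,u_2,u_3)$ guaranteed by $g(G)=3$, and use $\Delta(G)=3$ to control how many additional vertices can attach to each $u_i$. Since each triangle vertex already has two neighbours inside $C$, it can have at most one neighbour outside $C$; this immediately limits the ways the remaining $\leq 3$ vertices can be joined. Next I would layer the vertices by their distance from $C$ and use Proposition~\ref{expr} together with Lemma~\ref{key}: any configuration in which some $3$-subset $S$ fails to resolve a pair $\{x,y\}$ (equivalently $|\overline{R}(x,y)|\geq 3$) is forbidden, so I can discard candidate graphs by exhibiting a bad pair. Conversely, for each surviving candidate I must confirm ${\rm res}=3$, i.e. that $\omega(G)=3$ (so ${\rm res}\geq 3$ is not forced below $3$ by Theorem~\ref{res=2}, and not pushed to $\geq 4$). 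The tool $\max_{x,y}|\overline{R}(x,y)|+1$ from Proposition~\ref{expr} lets me verify ${\rm res}(G)=3$ by checking that no pair has more than two non-resolving vertices.

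The main obstacle will be organizing the case analysis so it is genuinely exhaustive rather than merely plausible. The natural stratification is by the order $n\in\{3,4,5,6\}$ and, within each, by how the non-triangle vertices distribute among distance-$1$ and distance-$2$ shells around $C$, and whether extra edges among them create new triangles (which is allowed, since $g=3$ only requires some triangle) while respecting $\Delta=3$. I would handle each $n$ separately: for $n=3$ only $K_3$ arises; for $n=4,5,6$ I would branch on the attachment pattern of the pendant/outer vertices, at each branch either ruling the graph out via an explicit $3$-set contained in some $\overline{R}(x,y)$ or recording it as a member of $\mathcal{F}_1$. The delicate point is ensuring that symmetric configurations are not double-counted and that no admissible graph is overlooked; matching the final tally against the $14$ graphs of Figure~\ref{F1} serves as the consistency check. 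I expect the degree bound $\Delta=3$ to do most of the pruning, with ${\rm res}=3$ (via the non-resolved-pair criterion) eliminating the remaining near-misses.
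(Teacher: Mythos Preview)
Your plan is correct and is essentially the paper's approach: invoke Proposition~\ref{n} to get $n\le 6$, fix a triangle, use $\Delta(G)=3$ to bound the attachments of the remaining $\le 3$ vertices, and eliminate bad configurations by exhibiting a $3$-set contained in some $\overline{R}(x,y)$. The paper organizes the case split by the degree pattern $(\delta(u_1),\delta(u_2),\delta(u_3))$ of the triangle rather than by $n$, but this is only a cosmetic difference in how the same finite enumeration is arranged.

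Two small corrections to your outline. First, the case $n=3$ is vacuous: the hypothesis $\Delta(G)=3$ forces $n\ge 4$, so $K_3$ never arises (and indeed $K_3$ has ${\rm res}=2$ and $\Delta=2$). Second, not all $14$ graphs of $\mathcal{F}_1$ have $\Delta=3$; some (e.g.\ $K_1+P_4$, $G_9$, $G_{10}$, $W_{1,5}$) have $\Delta\in\{4,5\}$ and are produced by Lemma~\ref{delta4} and the $\Delta=5$ case in Theorem~\ref{res=3}. So your final tally should match only the $\Delta=3$ members of $\mathcal{F}_1$, not all $14$. Also, since ${\rm res}(G)=3$ is a hypothesis of the lemma, you do not need to \emph{verify} ${\rm res}=3$ for each surviving candidate; you only need to show that every graph satisfying all three hypotheses lies in $\mathcal{F}_1$, which is exactly what the elimination-by-bad-$3$-set step accomplishes.
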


\begin{proof}
Let $C=(u_1,u_2,u_3)$ be a cycle in $G$. Suppose first that  $\delta (u_1)=3$ and $\delta(u_2)=\delta(u_3)=2$, and let $u\in N(u_1)\setminus \{u_2,u_3\}$. Clearly $n=4$, since otherwise no vertex $v\in N(u)\setminus \{u_1\}$ resolves the pair $\{u_2,u_3\}$ and so the set  $\{u_1,u,v\}$ is not a resolving set, contradicting ${\rm res}(G)=3$.
Therefore, $G\cong G_1\in \mathcal{F}_1$. Recall that the set of graphs
$\mathcal{F}_1$ is illustrated in Figure \ref{F1}.

Assume now that $\delta (u_1)=\delta(u_2)=3$ and $\delta(u_3)=2$. We distinguish two cases.

\emph{Case 1.} $N(u_1)\cap N(u_2)=\{u_3\}$: Let $N(u_1)=\{u_2,u_3,u\}$ and $N(u_2)=\{u_1,u_3,v\}$. Since $g(G)=3$, by Proposition ~\ref{n}, we have $n\leq 6$. If $n=5$ then $G$ is isomorphic to either $G_2$ or $G_3$ (both in $ \mathcal{F}_1$). If $n=6$ then there is a vertex $w\in (N(u)\cup N(v))\setminus \{u_1,u_2\}$. Further, $w\in N(u)\cap N(v)$ (otherwise either $\{u_1,u,w\}$ or $\{u_2,v,w\}$ is not a resolving set). Hence,  $G$ is isomorphic to either $G_4$ or $G_5$ (both contained in $\mathcal{F}_1$).

\,

\emph{Case 2.}  $N(u_1)\cap N(u_2)=\{u_3,u\}$: If it were the case that $n\geq 5$ then there would be a vertex $v\in V(G)$ such that $N(u)=\{u_1,u_2,v\}$, and the set $\{u_3,u,v\}$ would not be a resolving set. Therefore, $n=4$ and so $G\cong K_2+\overline{K}_2\in \mathcal{F}_1$.

\,

Suppose, finally, that $\delta(u_1)=\delta(u_2)=\delta(u_3)=3$. By Proposition~\ref{clique1} it follows that $G\cong K_4 \in \mathcal{F}_1$
when $N(u_1)\cap N(u_2) \cap N(u_3)=\{u\}$. Moreover, arguing as in Case 2 (when $n\geq 5$), we deduce that it cannot be the case that
 two of the $u_i$'s, say $u_1, u_2$,  have a common neighbour $u\notin N(u_3)$.
 Assume then that $N(u_i)\cap N(u_j)=\{u_k\}$ for all $i\neq j\neq k$.

 Let $N(u_1)=\{u_2,u_3,u\}$, $N(u_2)=\{u_1,u_3,v\}$ and $N(u_3)=\{u_1,u_2,w\}$. Clearly, the size of the induced subgraph $H=\langle \{u,v,w\} \rangle$ is either 3, 1 or 0.  Indeed, if $|E(H)|=2$, say $E(H)=\{\{u,v\},\{u,w\}\}$, then $S=\{u_2,u,w\}$ is not a resolving set since no vertex of $S$ resolves the pair $\{u_1,v\}$. Hence, $G$ is isomorphic to either $G_6$, $G_7$ or $G_8$ (all contained in $\mathcal{F}_1$).
\end{proof}

\begin{lemma}\label{delta4}
If $\Delta (G)=4$ and ${\rm res} (G)=3$ then $G\in \mathcal{F}_1$.
\end{lemma}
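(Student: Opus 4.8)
The plan is to first pin down the coarse structure of $G$ and then enumerate. Since $G$ has $\Delta(G)=4$ it is neither a path nor a cycle, so Proposition~\ref{Delta} applies; as the case $g(G)>3$ would force $\Delta(G)\le{\rm res}(G)=3$, contradicting $\Delta(G)=4$, we must have $g(G)=3$ (in particular $G$ is not a tree). Substituting $g(G)=3$ and ${\rm res}(G)=3$ into Proposition~\ref{n} gives $n\le 3\,{\rm res}(G)-3=6$, while a vertex of degree $4$ forces $n\ge 5$; hence $n\in\{5,6\}$. Finally, Proposition~\ref{clique1} yields $\omega(G)\le 4$ with equality only for complete graphs, and the only complete graph with $\omega=4$ is $K_4$, which has $\Delta=3$; discarding it leaves $\omega(G)\le 3$. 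Writing $u_0$ for a vertex of degree $4$ with $N(u_0)=\{u_1,u_2,u_3,u_4\}$, this means $\langle N(u_0)\rangle$ is triangle-free, since a triangle among the $u_i$ together with $u_0$ would produce a $K_4$.

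The engine for the enumeration is Proposition~\ref{expr}: ${\rm res}(G)=3$ is equivalent to $|\overline{R}(x,y)|\le 2$ for every pair $\{x,y\}$. Two consequences drive everything. First, $u_0$ resolves no pair contained in $N(u_0)$ (its distances to all neighbours equal $1$), so $u_0\in\overline{R}(u_i,u_j)$ and hence at most one further vertex may fail to resolve any pair $\{u_i,u_j\}$. Second, in the case $n=6$, writing $w$ for the unique vertex outside $N[u_0]$, the pair $\{u_0,w\}$ is failed exactly by the neighbours of $w$ (since $d(u_0,u_0)=0$, $d(u_0,w)=2$, and $u_i$ fails precisely when $u_i\sim w$), so $\deg(w)\le 2$.

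For $n=5$ we have $G=K_1+H$ with $H=\langle N(u_0)\rangle$ triangle-free on four vertices and containing an edge (to witness $g(G)=3$). Running through the triangle-free possibilities for $H$ (a single edge, $P_3$, $2K_2$, $K_{1,3}$, $P_4$, $C_4$) and testing the ``at most one extra failing vertex'' condition on the pairs inside $N(u_0)$, every option except $H\cong P_4$ produces a pair whose $\overline{R}$ contains three vertices, violating ${\rm res}(G)=3$; the surviving graph $K_1+P_4$ has ${\rm res}=3$ and is the corresponding member of $\mathcal{F}_1$. For $n=6$ the argument is the same in spirit but longer: using $\deg(w)\le 2$, triangle-freeness of $\langle N(u_0)\rangle$, and the pair conditions for $\{u_i,u_j\}$, $\{u_i,w\}$ and $\{u_0,w\}$, one lists the admissible adjacencies of $w$ and edges of $\langle N(u_0)\rangle$, discards each configuration producing a pair with $|\overline{R}|\ge 3$, and checks that each remaining graph both has ${\rm res}=3$ and appears among the $\Delta=4$ graphs of Figure~\ref{F1}.

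The main obstacle is the $n=6$ analysis: several near-miss configurations survive the degree and girth constraints and must be eliminated one by one by exhibiting a concrete $3$-set that fails to resolve some pair (equivalently, a pair with $|\overline{R}|\ge 3$), and for the genuinely admissible graphs one must still verify that ${\rm res}$ equals $3$ rather than something smaller and identify each with its drawing in $\mathcal{F}_1$. The bookkeeping becomes routine once the reductions $\deg(w)\le 2$ and ``triangle-free neighbourhood with at most one extra failing vertex per pair'' are in place, so the difficulty here is organizational rather than conceptual.
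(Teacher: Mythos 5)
Your overall strategy is sound and genuinely different from the paper's: the paper gets the key structural fact in one step by citing Lemma~4 of the Jannesari--Omoomi paper, which says that for a vertex $u_0$ of degree $4$ in a graph with resolving number $3$ the induced subgraph $\langle N(u_0)\rangle$ is a path; from there $g(G)=3$, $n\in\{5,6\}$, and the $n=6$ case reduces to a short check on the possible neighbourhoods of the single outside vertex. You instead rebuild that structure from scratch via Proposition~\ref{expr} (each pair has $|\overline{R}|\le 2$, and $u_0$ already fails every pair inside $N(u_0)$) together with $\omega(G)\le 3$, which is a nice self-contained route; your reductions $g(G)=3$, $n\in\{5,6\}$, $\omega(G)\le3$ and $\deg(w)\le 2$ are all correct, and your $n=5$ enumeration correctly isolates $K_1+P_4$.

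The shortfall is that the $n=6$ case --- which is where the paper's proof does essentially all of its concrete work --- is only described, not carried out: you never exhibit the non-resolving $3$-sets that kill the bad configurations, nor identify the survivors with $G_9$ and $G_{10}$. You also make the enumeration look larger than it is. Since $w$ is adjacent only to vertices of $N(u_0)$ and every pair in $N[u_0]$ is already at distance at most $2$ through $u_0$, adding $w$ does not change any distance between two vertices of $N[u_0]$; hence exactly the same pair-counting you did for $n=5$ forces $\langle N(u_0)\rangle\cong P_4$ when $n=6$ as well (a pair $\{u_i,u_j\}$ failed by $u_0$ and by two other $u_k$'s cannot be repaired by $w$). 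With that observation the $n=6$ case collapses to the paper's situation: $\langle N(u_0)\rangle=(u_1,u_2,u_3,u_4)$ and $N(w)$ is a subset of it of size $1$ or $2$, and one then checks, case by case as the paper does, that only $N(w)=\{u_1,u_4\}$ and $N(w)=\{u_2,u_3\}$ avoid a pair with $|\overline{R}|\ge 3$. As written, your argument has the right engine but stops short of the verification that constitutes the proof.
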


\begin{proof}
Let $u_0\in V(G)$ be a vertex with $\delta(u_0)=4$. By Lemma 4 of \cite{Jannesarichar}, the induced subgraph $\langle N(u_0) \rangle$ is a path, say $(u_1,u_2,u_3,u_4)$. Then $g(G)=3$ and, by Proposition~\ref{n},  either $n=5$ or $n=6$.
If $n=5$ then $G\cong K_1+P_4\in \mathcal{F}_1$. If $n=6$ then there is a vertex $u$ such that $d(u,u_0)=2$. Only when $\delta (u)=2$ and $N(u)$ is either $\{u_1,u_4\}$ or $\{u_2,u_3\}$ we obtain two possible graphs $G$ which are contained in $\mathcal{F}_1$: the graphs
$ G_{9}$ and $G_{10}$ (see Figure \ref{F1}).
In the remaining cases, we next specify a set $S$ with $|S|\geq 3$ that is not a resolving set, obtaining a contradiction.

If $\delta (u)=1$, take $S=\{u_0,u_1,u\}$ for $N(u)=\{u_1\}$, and $S=\{u_0,u_2,u\}$ for $N(u)=\{u_2\}$ (analogous for  $N(u)=\{u_4\}$ and $N(u)=\{u_3\}$). If $\delta (u)=2$, consider $S=\{u_2,u_4,u\}$ for $N(u)=\{u_1,u_2\}$, and $S=\{u_0,u_2,u\}$ for  $N(u)=\{u_1,u_3\}$ (similar for $N(u)=\{u_3,u_4\}$ and $N(u)=\{u_2,u_4\}$). Finally, if $\delta (u)\geq 3$ take  $S=N(u)$.
 \end{proof}

Consider now the set of graphs $\mathcal{F}_2$ shown in Figure~\ref{F2}.

\begin{lemma}\label{girth5}
If $g (G)=5$ and $\Delta (G)={\rm res} (G)=3$ then  $G\in \mathcal{F}_2$.
\end{lemma}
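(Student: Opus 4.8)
The plan is to fix a shortest cycle and build the graph outward, pruning configurations with the resolving condition. Since $g(G)=5$, there is a $5$-cycle $C=(u_1,u_2,u_3,u_4,u_5)$, and since $\Delta(G)=3$ and ${\rm res}(G)=3$, Case~3 of Proposition~\ref{n} gives $n\leq 5\cdot 3-5=10$; trivially $n\geq 5$. So $G$ has between $5$ and $10$ vertices, which already makes the problem finite in principle.

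Next I would record the local structure forced by $g(G)=5$ and $\Delta(G)=3$. Because a common neighbour of two cycle vertices would create a triangle or a $4$-cycle, no two vertices of $C$ share a neighbour off $C$; combined with $\Delta(G)=3$ this means each $u_i$ has at most one neighbour $v_i\notin C$, and the existing $v_i$ are pairwise distinct. Thus $V(G)\setminus C$ splits into a first layer (a subset of $\{v_1,\dots,v_5\}$) and possibly a second layer at distance $2$ from $C$. A further girth computation shows that two first-layer vertices $v_i,v_j$ can be adjacent only when $u_i,u_j$ lie at distance $2$ on $C$ (otherwise the walk $u_i v_i v_j u_j$ closes a cycle of length $<5$); this is exactly the adjacency pattern of the Petersen graph, and I expect the enumeration to surface the Petersen graph together with its relevant subgraphs.

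The core of the argument is a case analysis on the number $k$ of degree-$3$ cycle vertices (equivalently, the number of existing $v_i$) and on the adjacencies among off-cycle vertices, subject to the degree-$3$ and girth-$5$ constraints. To discard a configuration I would use the characterization behind Proposition~\ref{expr}: a set $S$ with $|S|\geq 3$ fails to be resolving precisely when $S\subseteq\overline{R}(x,y)$ for some pair $\{x,y\}$, so for each bad configuration I exhibit an explicit $3$-set $S$ and a pair it does not resolve, contradicting ${\rm res}(G)=3$ (the same technique used in Lemmas~\ref{girth3} and~\ref{delta4}). Exploiting the dihedral symmetry of $C$ keeps the number of genuinely distinct cases small. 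Finally I would conclude that the surviving configurations are exactly the graphs of $\mathcal{F}_2$ and, conversely, verify that each of them satisfies $g=5$ and $\Delta={\rm res}=3$.

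The main obstacle will be the completeness and organization of this case analysis: with up to five first-layer vertices plus a possible second layer, and with adjacencies among off-cycle vertices that must simultaneously respect girth $5$ and maximum degree $3$, the number of potential configurations is sizeable. Keeping the enumeration exhaustive yet non-redundant --- using the symmetry of $C$ to collapse equivalent cases and choosing, for each discarded case, a clean non-resolving $3$-set --- is where the real care is required; the underlying distance computations are routine.
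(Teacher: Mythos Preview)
Your outline is sound and would lead to a correct proof, but it is organized quite differently from the paper's argument. You fix a $5$-cycle $C$ and build outward in layers, casing on the number of degree-$3$ vertices of $C$ and on the adjacencies of the off-cycle vertices; the paper instead opens with a short structural lemma---any two distinct $5$-cycles of $G$ share at most one edge---and then fixes a vertex $u_0$ of degree $3$ through which the number of $5$-cycles is \emph{maximum}, casing on whether that number is $1$, $2$, or $3$. The maximality of $u_0$ together with the edge-sharing lemma gives strong constraints in each case (no other vertex can lie on more $5$-cycles than $u_0$, and the cycles already present determine most adjacencies), so only three short subcases arise and each produces one or two members of $\mathcal{F}_2$ almost immediately. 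Your layered enumeration, by contrast, has no canonical anchor and therefore---as you yourself note---must lean on the dihedral symmetry of $C$ and careful bookkeeping (including a possible second layer) to keep the case tree from proliferating; it works, but the paper's device of counting $5$-cycles through a well-chosen vertex is what makes the published proof short.
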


\begin{proof}
We first observe that any two different 5-cycles in $G$ meet in at most one edge. Indeed,
suppose on the contrary that there are two 5-cycles meeting in two edges $e, e'\in E(G)$. Since $g(G)=5$, one can easily check that the edges $e, e'$ must be consecutive, and then the set formed by the three end vertices of $e$, $e'$ is not a resolving set, which contradicts ${\rm res} (G)=3$.

Consider now a vertex $u_0\in V(G)$ with $\delta(u_0)=3$ and such that
the number of 5-cycles through $u_0$ is maximum. Let $N(u_0)=\{u_1,u_2,u_3\}$.
 We distinguish three cases.

\emph{ Case 1.} Suppose that there exists a unique 5-cycle through $u_0$, say $C=(u_0,u_1,u,v,u_2)$. If $n=6$ then $G \cong G_{11}\in\mathcal{F}_2$. Otherwise, let $w\in V(G)\setminus (N[u_0]\cup \{u,v\})$.

Clearly, $w\notin N(u_i)$ for $1\leq i\leq 3$:
 the sets $\{u_0,u_1,w\}$,  $\{u_0,u_2,w\}$, $\{u_0,u_3,w\}$ would not be resolving sets if it were the case that $w$ is adjacent to, respectively, $u_1$, $u_2$, $u_3$ (recall that $u_0$ belongs to exactly one 5-cycle).

Assume now that $w\in N(u)$ (analogous for $w\in N(v)$). If $n=7$ then $G\cong G_{12}\in\mathcal{F}_2$. When $n\geq 8$, there is a vertex $x\in V(G)\setminus (N[u_0]\cup \{u,v,w\})$. Arguing as above, we conclude that $x\notin N(u_i)$ for every $1\leq i\leq 3$, and so either $x\in N(w)$ or
$x\in N(v)$. In both cases, one reaches a contradiction with ${\rm res} (G)=3$. Indeed, if $x\in N(w)$ then $d(x,u_1)=d(x,v)=3$ since there is no 5-cycle going through $u$ other than $C$. This implies that $\{u,w,x\}$ is not a resolving set. A similar reasoning gives that $\{u,v,x\}$ is not a resolving set when $x\in N(v)$.

\,

\emph{ Case 2.} Suppose that there are exactly two different 5-cycles through $u_0$, say $C_1=(u_0,u_1,u,v,u_2)$ and $C_2=(u_0,u_2,u',v',u_3)$. If $n=8$ then $G\cong G_{13}\in\mathcal{F}_2$. Otherwise,
there exists a vertex $w\in V(G)\setminus (C_1\cup C_2)$. Distinguishing cases according to the adjacencies of $w$, we next
give a set $S$ with $|S|=3$ that is not a resolving set; a contradiction with ${\rm res} (G)=3$.
 One only has to note that: (1) $u_0$ belongs to exactly two $5$-cycles, (2) two 5-cycles share at most one edge, (3) the number of 5-cycles through $u_0$ is maximum.

If $w\in N(u_1)$ (analogous for $w\in N(u_3)$) then no vertex of $S=\{u_0,u_1,u_3\}$ resolves the pair $\{u,w\}$. If $w\in N(u)$ (similar for $w\in N(v')$) then the pair $\{u_0,u_2\}$ is resolved by no vertex of $S=\{u, v',w\}$. Finally, if $w\in N(v)$ (analogous for $w\in N(u')$) then no vertex of $S=\{u_2,v,u'\}$ resolves $\{u,w\}$.

\,

\emph{ Case 3.} There are three different 5-cycles $C_i$, $1\leq i\leq 3$, through $u_0$: By Proposition~\ref{n}, $n=10$. Moreover, arguing as above one can easily prove that the set $E(G)$ contains no edges other than those of the three cycles $C_i$. Therefore, $G\cong G_{14}\in \mathcal{F}_2$.
\end{proof}

\begin{figure}[ht]
\begin{center}
\includegraphics[width=115mm]{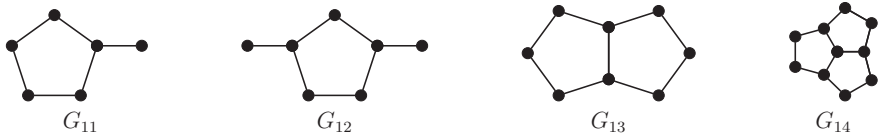}
\caption{The set of graphs $\mathcal{F}_2$.}\label{F2}
\end{center}
\end{figure}

We are now ready for proving the main result in this section.

\begin{theorem}\label{res=3}
${\rm res}(G)=3$ if and only if $G$ is an even cycle, the star $ K_{1,3}$ or $G\in\mathcal{F}_1\cup \mathcal{F}_2$.
\end{theorem}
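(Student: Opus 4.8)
The plan is to prove both directions of the characterization, relying heavily on the parameter bounds established in Section~\ref{sec:section3}. The forward direction is the substantial one; the backward direction should be routine verification that each listed graph indeed has resolving number~$3$, which can be done by exhibiting, for each graph, a pair $\{x,y\}$ with $|\overline{R}(x,y)|=2$ (so $\mathrm{res}\geq 3$ by Proposition~\ref{expr}) and checking that every $3$-subset resolves, i.e. that $\max_{x,y}|\overline{R}(x,y)|=2$. For even cycles one computes directly that antipodal-type pairs are unresolved by exactly two vertices, and for the star $K_{1,3}$ the two leaves attached symmetrically give $|\overline{R}|=2$.

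For the forward direction, suppose $\mathrm{res}(G)=3$. First I would dispose of the degenerate structural types using Theorem~\ref{res=2}: since $\mathrm{res}(G)=3>2$, the graph $G$ is neither a path nor an odd cycle. The strategy is then to split according to girth and maximum degree, invoking the propositions of Section~\ref{sec:section3} to bound everything into a finite regime. By Proposition~\ref{Delta}, we have $\Delta(G)\leq 3\,\mathrm{res}(G)-4=5$ if $g(G)=3$ and $\Delta(G)\leq \mathrm{res}(G)=3$ if $g(G)>3$. Likewise Proposition~\ref{girth} gives $g(G)\leq 2\,\mathrm{res}(G)-1=5$ whenever $G$ is neither a tree nor a cycle, and Proposition~\ref{n} bounds the order $n$ in every girth case. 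So only finitely many girth values ($3,4,5,\infty$) and small degree and order values survive, and the three technical lemmas (Lemmas~\ref{girth3},~\ref{delta4},~\ref{girth5}) were engineered precisely to enumerate the graphs in each surviving configuration.

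Concretely, I would organize the forward argument as follows. If $G$ is a tree, then by Proposition~\ref{deltatree} we have $\Delta(T)\leq \mathrm{res}(T)=3$, and a short analysis (a non-path tree with maximum degree $3$ and resolving number $3$ must be as small as possible) forces $T\cong K_{1,3}$; the spider/order bounds of Propositions~\ref{ordertree} and~\ref{diametertree} confine any larger tree to resolving number exceeding~$3$. If $G$ is a cycle, Theorem~\ref{res=2} already tells us odd cycles have $\mathrm{res}=2$, so $\mathrm{res}(G)=3$ forces $G$ to be an even cycle (again a direct check of $|\overline{R}|$). Otherwise $G$ is neither a tree nor a cycle, so Proposition~\ref{girth} gives $g(G)\in\{3,4,5\}$. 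When $g(G)=3$, Proposition~\ref{Delta} gives $\Delta(G)\leq 5$; the cases $\Delta(G)=5$ and $\Delta(G)=4$ are handled via $W_{1,5}$-type analysis and Lemma~\ref{delta4} respectively, while $\Delta(G)=3$ is exactly Lemma~\ref{girth3}, yielding members of $\mathcal{F}_1$. When $g(G)>3$, Proposition~\ref{Delta} forces $\Delta(G)=3$ (degree~$2$ everywhere would make $G$ a cycle), so $g(G)\in\{4,5\}$ with $\Delta(G)=3$. I expect $g(G)=4$ to be eliminated or absorbed (a girth-$4$ graph with the given degree and order constraints from Proposition~\ref{n} either fails $\mathrm{res}=3$ or reduces to an already-listed graph), and $g(G)=5$ is precisely Lemma~\ref{girth5}, giving $\mathcal{F}_2$.

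The main obstacle will be ensuring the case split is genuinely exhaustive and that no graph is double-counted or missed at the boundaries between cases—in particular reconciling the girth-$4$ subcase with $\Delta=3$, and verifying that the wheel $W_{1,5}$ (the unique $g=3$, $\Delta=5$ candidate from the tightness example in Proposition~\ref{n}) is correctly placed in $\mathcal{F}_1$. Since the three lemmas already do the heavy enumeration, the theorem's proof itself should mostly be a careful dispatch: confirm $\Delta$ and $g$ lie in the small ranges, route each configuration to the appropriate lemma, and assemble the union $\{$even cycles$\}\cup\{K_{1,3}\}\cup\mathcal{F}_1\cup\mathcal{F}_2$, with the backward direction checked graph-by-graph using the clean formula of Proposition~\ref{expr}.
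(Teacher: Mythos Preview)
Your proposal is correct and follows essentially the same approach as the paper: bound $\Delta(G)$ and $g(G)$ via the results of Section~\ref{sec:section3}, then dispatch each surviving configuration to Lemmas~\ref{girth3}, \ref{delta4}, \ref{girth5}. The paper organizes the case split primarily by $\Delta(G)\in\{3,4,5\}$ rather than by girth first, and for the $g(G)=4$, $\Delta(G)=3$ subcase (which you leave as ``eliminated or absorbed'') it gives the direct argument: if $C=(u_1,u_2,u_3,u_4)$ is a minimum $4$-cycle and $u\in N(u_1)\setminus C$, then $\{u_1,u_3,u\}$ fails to resolve $\{u_2,u_4\}$, contradicting ${\rm res}(G)=3$.
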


\begin{proof}
It is easy to check that even cycles, $K_{1,3}$ and all graphs of $\mathcal{F}_1\cup \mathcal{F}_2$ have resolving number 3.

Consider now a graph $G$ with ${\rm res}(G)=3$. By Theorem \ref{res=2}, $G$ is neither a path nor an odd cycle. We can also assume that $G$ is not an even cycle (otherwise the result follows).

Proposition~\ref{Delta} yields $\Delta(G)\leq 5$. Moreover, $\Delta (G)> 2$ since the only connected graphs with maximum degree  2 are paths and cycles.

Suppose first that $\Delta (G)=3$. If $G$ is a tree, by Proposition~\ref{deltatree}, we have $G\cong K_{1,3}$. Otherwise, Proposition~\ref{girth} gives $g(G)\leq 5$. Lemmas~\ref{girth3} and \ref{girth5} lead to $G\in\mathcal{F}_1$ and  $G\in\mathcal{F}_2$ when $g(G)=3$ and $g(G)=5$, respectively.
Moreover, $g (G)\neq  4$. Indeed, suppose on the contrary that $g(G)=4$ and let $C=(u_1,u_2,u_3,u_4)$ be a minimum cycle in $G$. Assume that $\delta (u_1)=3$ and so there is a vertex $u\in N[u_1]\setminus C$. Hence, no vertex of the set $\{u_1,u_3,u\}$ resolves the pair $\{u_2,u_4\}$, which contradicts ${\rm res}(G)=3$.

By Lemma ~\ref{delta4}, we obtain $G\in\mathcal{F}_1$ when  $\Delta (G) =4$. Assume, finally, that $\Delta (G)=5$, and let $u\in V(G)$ with $\delta(u)=5$. Since ${\rm res}(G)=3$, by Lemma 3 of \cite{Jannesarichar}, we have that the induced subgraph $\langle N(u) \rangle$ is a 5-cycle. Then $g(G)=3$ and, by Proposition \ref{n}, it follows that $n=6$. Therefore, $G$ is isomorphic to the wheel graph $W_{1,5}\in \mathcal{F}_1$.
\end{proof}

\section{Concluding remarks}\label{sec:section5}

In this paper, we have studied a graph parameter related to the metric dimension: the resolving number.
We first establish an important difference between both parameters: ${\rm res} (G)$ is polynomial-time computable while computing ${\rm dim} (G)$ is NP-hard (see \cite{landmarks}). We then relate the resolving number to classical graph parameters, and characterize the graphs with resolving number 3 by using those relations. As it was said before, our study follows the same vein as several papers on metric dimension, and
most of our results either improve relationships obtained in other papers or continue with the studies developed in them.

Although  we provide an $O(n^3)$ time algorithm for computing the resolving number of an arbitrary graph,
it would be interesting to find the exact computational complexity of this parameter, even in specific families of graphs. Also, the non tight upper bounds of Proposition \ref{n} could be improved. Moreover, we use very small examples (the graph $G_{14}$ in Figure~\ref{F2} and the wheel graph $W_{1,5}$ in Figure \ref{F1}) to show the tightness of two bounds given in Propositions \ref{n} and \ref{Delta}, respectively, and so
it appears that they are not tight for large enough values of ${\rm res}(G)$.
 Finally, the problem of characterizing the graphs with fixed resolving number $a\geq 4$ remains open.

\bibliographystyle{plain}
\bibliography{biblio-RNG}
\label{sec:biblio}

\end{document}